\definecolor{light-gray}{gray}{0.70}
\definecolor{dark-gray}{gray}{0.38}
\numberwithin{equation}{section}
\newtheorem{theorem}{Theorem}[section]
\newtheorem{corollary}[theorem]{Corollary}
\newtheorem{definition}[theorem]{Definition}
\newtheorem{example}[theorem]{Example}
\newtheorem{lemma}[theorem]{Lemma}
\newtheorem{proposition}[theorem]{Proposition}
\begin{document}

\title{Hibi Algebras and Representation Theory}

\author{Sangjib Kim}
\email{sk23@korea.ac.kr}
\address{Department of Mathematics, Korea University, Seoul 02841, South Korea}

\author{Victor Protsak}
\email{vp35@cornell.edu}
\address{Department of Mathematics, Cornell University, Ithaca, NY 14853, USA}

\begin{abstract}
This paper gives a survey on the relation between Hibi algebras and representation theory. 
The notion of Hodge algebras or algebras with straightening laws has been proved 
to be very useful to describe the structure of many important algebras 
in classical invariant theory and representation theory \cite{BH93, Ei80, DEP82, GL01, Se07}. 
In particular, a special type of such algebras introduced 
by Hibi \cite{Hi87} provides a nice bridge between combinatorics and 
representation theory of classical groups.
We will examine certain poset structures of Young tableaux and affine monoids,  
Hibi algebras in toric degenerations of flag varieties, and 
their relations to polynomial representations of the complex general linear group.
\end{abstract}

\subjclass[2010]{13A50, 13F50, 20G05, 05E10, 05E15}
\keywords{Algebras with straightening laws, Hibi algebras, Distributive lattices, 
Affine semigroups, Gelfand-Tsetlin patterns, Representations, General linear groups}

\maketitle

\section{Young tableaux and Gelfand-Tsetlin poset}

In this section, we will define some partially ordered sets and 
investigate their properties.

\subsection{Poset of column tableaux} \label{sec-yd-tab}

A \emph{Young diagram} $\lambda$ is a collection of boxes arranged 
in left-justified rows with the row lengths in non-increasing order. 
\[
\young(\ \ \ \ \ \ \ \ \ ,\ \ \ \ \ \ ,\ \ ,\ )
\]
Writing $\lambda_i$ for the length of the $i$th row of $\lambda$ 
counting from top to bottom, we will identify $\lambda$ 
with a non-increasing sequence of integers 
\[
\lambda=(\lambda_1, \lambda_2, ...) \quad
     \text{such that $\lambda_1 \geq \lambda_2 \geq \cdots \geq 0$}.
\]
The \emph{transpose} (or \emph{conjugate}) $\lambda^t$ of a Young diagram $\lambda$ is 
the Young diagram $(d_1, d_2, ...)$ where $d_j$ is the number of boxes 
in the $j$th column of $\lambda$ counting from left to right.
The \emph{depth} of $\lambda$ is the number of non-empty rows 
in $\lambda$ and will be denoted by $\mathbf{d}(\lambda)$.

A \emph{Young tableaux} is a filling of the boxes of a Young diagram 
with positive integers. The \emph{content} of a Young tableau $T$ is 
a sequence $(c_1, c_2, ...)$ where $c_i$ is the number of boxes containing $i$ in $T$. 
A Young tableau is called \emph{semistandard} if its entries in each row  weakly 
increase from left to right and its entries in each column strictly increase 
from top to bottom. 

From now on, we fix a positive integer $n$ and then consider Young diagrams whose 
depths are not more than $n$ and Young tableaux whose entries are from $\{1, 2, ..., n\}$.
For example,  when $n=6$, the following is a semistandard Young tableau 
\begin{equation*}
\young(111122333566,2223335566,333555,5566)
\end{equation*}
on a  Young diagram $(12,10,6,4,0,0)$ with content $(4,5,9,0,8,6)$.

For notational convenience, for $1\leq k \leq n$,  
we let $(1^k)$ or $(k)^t$ denote a Young diagram having $k$ boxes in a single column, 
and write $[i_1, i_2, ..., i_k]$ for a semistandard 
tableau on a Young diagram $(1^k)$ whose $j$th entry counting 
from top to bottom is $i_j$ for $1\leq j \leq k$. 
We will call $[i_1, i_2,..., i_k]$ a \emph{column tableau of depth} $k$. 
See Figure \ref{fig-column-tab}.

\begin{figure}
\begin{equation*}
(1^k) = (k)^t = \tableau{\ \\  \  \\ \vdots \\  \ } \qquad \ \text{and} \qquad
[i_1, i_2, ..., i_k] = \tableau{i_1\\  i_2 \\ \vdots \\  i_k}
\end{equation*}
\caption{A Young diagram with a single column having $k$ boxes and  
              a column tableau of depth $k$.}
\label{fig-column-tab}
\end{figure}

\begin{definition}
The \emph{poset of column tableaux} is the set of column tableaux
\[
L_n = \bigcup_{1 \leq k \leq n} 
      \left\{ [i_1, i_2, ..., i_k] : 1 \leq i_1 < i_2 < \cdots < i_k \leq n\right\}
\]
with the following partial order. For $I, J \in L_n$ where
\[
I=[i_1, i_2, ..., i_a] \quad \text{and}  \quad J=[j_1,j_2,...,j_b],
\] 
we let $I \geq_{tab} J$ if $a \leq b$ and $i_{\ell} \geq j_{\ell}$ for all $1\leq \ell \leq a$. 
\end{definition}
It is straightforward to check that the poset $(L_n, \geq_{tab})$ 
forms a distributive lattice \cite{GL01}. For $I=[i_1, i_2, ..., i_a]$ and $J=[j_1,j_2,...,j_b]$ 
in $L_n$ with $a \leq b$, their join and meet are
\[
I \vee J = [x_1, ..., x_a] \quad \text{and} \quad I \wedge J =[y_1, ..., y_b]
\]
respectively where 
\[
 x_{\ell} = \max(i_{\ell}, j_{\ell}) \quad {\text{for\ }} 1\leq \ell \leq a,
\qquad
 y_{\ell}  =
 \begin{cases}
  \min(i_{\ell}, j_{\ell})  & {\text{for\ }} 1\leq \ell \leq a, \\ 
  \ \ j_{\ell}              & {\text{for\ }} a < \ell \leq b. 
  \end{cases}
\]
We will call $L_n$ a \emph{distributive lattice of column tableaux}. 
See Figure \ref{fig:Hasse-L4}.

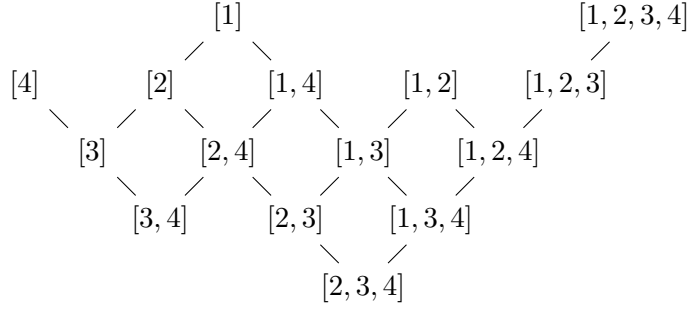
\begin{figure}[!ht]
\centering
\begin{tikzpicture}[scale=.9]
 \node (4) at (-4,1) {$[4]$};
  \node (3) at (-3,0) {$[3]$};  
  \node (34) at (-2,-1) {$[3,4]$};  
  \node (2) at (-2,1) {$[2]$};
  \node (24) at (-1,0) {$[2,4]$};  
  \node (1) at (-1,2) {$[1]$};  
  \node (23) at (0,-1) {$[2,3]$};
  \node (14) at (0,1) {$[1,4]$};  
  \node (234) at (1,-2) {$[2,3,4]$};  
  \node (13) at (1,0) {$[1,3]$};
  \node (134) at (2,-1) {$[1,3,4]$};  
  \node (12) at (2,1) {$[1,2]$};  
  \node (124) at (3,0) {$[1,2,4]$};
  \node (123) at (4,1) {$[1,2,3]$};  
  \node (1234) at (5,2) {$[1,2,3,4]$};  
 \draw (4) -- (3) -- (2) -- (1) -- (14) -- (13) -- (12) -- (124) -- (123) -- (1234);  
 \draw (2) -- (24) -- (14);
 \draw (3)  -- (34) -- (24) -- (23) -- (13) -- (134) -- (124);  
 \draw (23) -- (234) -- (134);
\end{tikzpicture}
\caption{The Hasse diagram of ${L}_4$. 
              The elements decrease along the lines from left to right.}
\label{fig:Hasse-L4}
\end{figure}

We remark that every multichain (i.e., linearly ordered multisubset) of 
$L_n$ can be identified with a semistandard Young tableau. 
For every multichain, by concatenating its elements in weakly increasing order,
we obtain a semistandard tableau. 
Conversely, from the definition of semistandard Young tableaux, 
the columns of every semistandard Young tableau form a multichain 
of $L_n$ with respect to the order $\geq_{tab}$. See Figure \ref{fig:chain-ssyt}.
\begin{figure}[!ht]
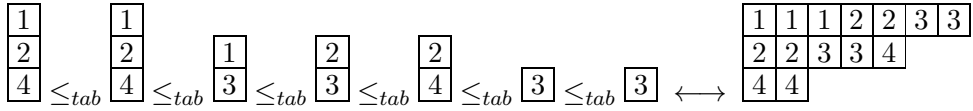

\centering
\[
\young(1,2,4) \leq_{tab} \young(1,2,4) \leq_{tab} \young(1,3) \leq_{tab} \young(2,3) 
\leq_{tab} \young(2,4) \leq_{tab} \young(3) \leq_{tab} \young(3)  
\; \longleftrightarrow \; \young(1112233,22334,44)
\]
\caption{A multichain of $L_4$ and a semistandard Young tableau.}
\label{fig:chain-ssyt}
\end{figure}

\subsection{GT poset and indicator functions} \label{sec-GT-indicator}

Usually, a Gelfand-Tsetlin (GT) pattern is defined as a triangular array of integers satisfying 
certain inequalities \cite{GT50, Mo06}. Here, we want to define it using a poset.

\begin{definition}\label{def-GT-patterns-poset}
\begin{enumerate}
\item The \emph{GT poset} $\Gamma_n$ is the set
\[
\Gamma_n = \left\{ z^{(i)}_j : 1 \leq j \leq i \leq n \right\}
\]
with the partial order $z_{j}^{(i+1)} \geq z_{j}^{(i)} \geq z_{j+1}^{(i+1)}$ 
for all $1 \leq j \leq i \leq n-1$.

\item A \emph{GT pattern} is  an order-preserving map from $\Gamma_n$ to non-negative integers, 
that is, a map $f: \Gamma_n \longrightarrow \mathbb{Z}_{\geq 0}$ such that 
\[
\text{$z^{(a)}_b \geq z^{(c)}_d$ in $\Gamma_n$ implies 
       that $f(z^{(a)}_b) \geq f(z^{(c)}_d)$.} 
\]
\item We let $S_n$ denote the set of all GT patterns.
\end{enumerate}
\end{definition}

We shall draw $\Gamma_n$ in the form of an inverted pyramid as in Figure \ref{fig:Hasse-GT4}. 
Then, for $f \in S_n$, by placing its values $f(z^{(i)}_j)$ at the positions of $z^{(i)}_j$ 
in $\Gamma_n$, we can identify $f$ with a triangular array of integers 
as GT patterns are usually defined. 
We can also identify  $f$ with an integral point 
$(f(z^{(i)}_j))_{1 \leq j \leq i \leq n}$ in $\mathbb{R}^{n(n+1)/2}$ 
and then $S_n$ can be considered an integral lattice cone  
in $\mathbb{R}^{n(n+1)/2}$  \cite{Ho-Weyl}.

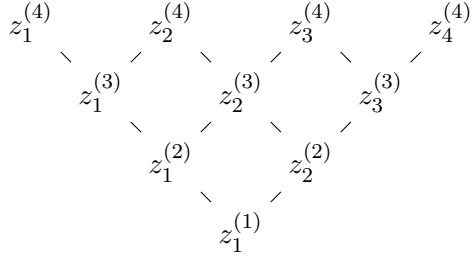
\begin{figure}[!ht]
\centering
\begin{tikzpicture}[scale=.93]
 \node (41) at (-3,2) {$z^{(4)}_1$};
 \node (42) at (-1,2) {$z^{(4)}_2$};
 \node (43) at (1,2) {$z^{(4)}_3$};
 \node (44) at (3,2) {$z^{(4)}_4$};
 \node (31) at (-2,1) {$z^{(3)}_1$};
 \node (32) at (0,1) {$z^{(3)}_2$};
 \node (33) at (2,1) {$z^{(3)}_3$};  
 \node (21) at (-1,0) {$z^{(2)}_1$};
 \node (22) at (1,0) {$z^{(2)}_2$};
 \node (11) at (0,-1) {$z^{(1)}_1$};  
   \draw (41) -- (31) -- (42) -- (32) -- (43) -- (33) -- (44);  
   \draw (31) -- (21) -- (32) -- (22) -- (33);
   \draw (21)  -- (11) -- (22);  
\end{tikzpicture}
\caption{The Hasse diagram of the GT poset $\Gamma_4$. 
              The elements decrease along the lines from left to right.}
\label{fig:Hasse-GT4}
\end{figure}

Now let us focus on GT patterns $f \in S_n$ whose images are contained in $\{ 0, 1\}$,
or equivalently,  the points in the lattice cone $S_n \subseteq \mathbb{R}^{n(n+1)/2}$ 
whose coordinates are either $1$ or $0$. Since $f$ is order-preserving, its support 
\[
Supp(f)=\{ x\in \Gamma_n: f(x) \ne 0\}
\] 
is an order-increasing subset of $\Gamma_n$, i.e., for $x,y \in \Gamma_n$, 
if $x \in Supp(f)$ and $y \geq x$, then $y \in Supp(f)$. 
In fact, for every order-increasing subset $A$ of $\Gamma_n$, its \emph{indicator function}  
$\mathbf{1} _{A} : \Gamma_n \to \{ 0,1\}$  belongs to $S_n$ where
\[
\mathbf {1} _{A}(x)=
        {\begin{cases} 
		        1&{\text{if }} x \in A,\\
		        0&{\text{if }} x \notin A.
	\end{cases}}
\]

\begin{definition} 
The \emph{poset of indicator functions} is the set
\[
\Lambda_n = \left\{ \mathbf {1} _{A} \in S_n : \text{$A$ is 
         a non-empty order-increasing subset of $\Gamma_n$} \right\}
\]
with the reverse inclusion order on the order-increasing subsets of $\Gamma_n$, that is,
\[
\mathbf {1} _{A} \geq_{ind} \mathbf {1} _{B} \text{ \ if and only if \ } A \subseteq B.
\] 
\end{definition}
Then, with the following join and meet
\begin{equation}\label{join-meet-indicator}
\mathbf {1} _{A} \vee \mathbf {1} _{B} = \mathbf {1} _{A \cap B} \quad \text{and} \quad 
\mathbf {1} _{A} \wedge \mathbf {1} _{B} = \mathbf {1} _{A \cup B}
\end{equation}
respectively, the poset $(\Lambda_n, \geq_{ind})$ is a distributive lattice. 

\begin{theorem} \label{thm:isom-birkoff}
The poset $(L_n, \geq_{tab})$ of column tableaux is order-isomorphic to 
the poset $(\Lambda_n, \geq_{ind})$ of indicator functions.
\end{theorem}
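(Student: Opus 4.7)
The plan is to construct an explicit order-isomorphism $\phi : L_n \to \Lambda_n$ by sending each column tableau $I = [i_1, i_2, \ldots, i_k]$ to $\mathbf{1}_{A_I}$, where
\[
A_I = \{z_\ell^{(i)} \in \Gamma_n : 1 \le \ell \le k \text{ and } i \ge i_\ell\}.
\]
Intuitively, $A_I$ is the ``upper staircase'' in $\Gamma_n$ whose $i$th row consists of the leftmost $|\{\ell : i_\ell \le i\}|$ entries.

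First I would verify that $A_I$ is a non-empty order-increasing subset of $\Gamma_n$, so that $\phi(I)$ really lies in $\Lambda_n$. Since the partial order on $\Gamma_n$ is generated by the two covering relations $z_j^{(i+1)} \ge z_j^{(i)}$ and $z_j^{(i)} \ge z_{j+1}^{(i+1)}$, it suffices to check upward closure under both; each reduces immediately to the strict inequalities $i_1 < i_2 < \cdots < i_k$ defining a column tableau.

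Next, for the order-preserving property, let $I = [i_1, \ldots, i_a]$ and $J = [j_1, \ldots, j_b]$. Testing $A_I \subseteq A_J$ on the distinguished elements $z_\ell^{(i_\ell)} \in A_I$ forces $a \le b$ and $i_\ell \ge j_\ell$ for every $1 \le \ell \le a$; conversely, these conditions clearly imply $A_I \subseteq A_J$. Since $\mathbf{1}_{A_I} \ge_{ind} \mathbf{1}_{A_J}$ is by definition $A_I \subseteq A_J$, this yields $I \ge_{tab} J$ if and only if $\phi(I) \ge_{ind} \phi(J)$, and injectivity of $\phi$ falls out for free.

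The main obstacle I expect is surjectivity: given an arbitrary non-empty order-increasing $B \subseteq \Gamma_n$, one must recover a column tableau mapping to $\mathbf{1}_B$. The key structural observation is that each row $z_1^{(i)}, \ldots, z_i^{(i)}$ is actually a chain in $\Gamma_n$ (because $z_j^{(i)} \ge z_{j+1}^{(i+1)} \ge z_{j+1}^{(i)}$), so $B$ must intersect each row in a prefix of some length $b_i$; and the interlacing relations force $b_i \le b_{i+1} \le b_i + 1$. Once this description is in hand, $B$ is determined by $(b_1, \ldots, b_n)$, and setting $k = b_n$ together with $j_\ell = \min\{i : b_i \ge \ell\}$ exhibits the required preimage $J = [j_1, \ldots, j_k]$. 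Everything else is routine bookkeeping.
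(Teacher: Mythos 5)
Your construction is exactly the paper's map: your set $A_I$ is the support of the paper's $f_I$, whose row $a$ consists of the leftmost $\ell_a$ positions, so the proposal follows the same approach, merely spelling out the order-embedding and surjectivity checks that the paper declares straightforward and delegates to a reference. The verification you outline (upward closure via $i_1<\cdots<i_k$, testing inclusion on the elements $z_\ell^{(i_\ell)}$, and the row-prefix description of order-increasing subsets) is correct.
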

\begin{proof}
Let $I=[i_1, i_2, ..., i_k] \in L_n$. For each $1\leq a \leq n$, 
we let $\ell_a$ be the number of the entries $i_j$ in $I$ 
which are not more than $a$. Then, we consider an order-preserving map
$f_I:\Gamma_n \longrightarrow \{ 0,1\}$ such that 
for each $a$, the number of $z_{b}^{(a)}$ for $1\leq b \leq a$ such 
that  $f_I(z_{b}^{(a)})=1$ is $\ell_a$. Since we know that 
\[
f_I (t^{(a)}_1) \geq f_I (t^{(a)}_2) \geq \cdots 
                    \geq f_I (t^{(a)}_{a-1}) \geq f_I (t^{(a)}_a)
\] 
for $1\leq a \leq n$, the numbers $\ell_a$ can completely determine $f_I$. 
Then, it is straightforward to check that the map sending $I$ to $f_I$ 
gives an order-isomorphism from $L_n$ to $\Lambda_n$.  See \cite[\S 3.3]{Ki08}. 
\end{proof}

For $I \in L_n$, if we write $A$ for the support of the corresponding 
map $f_I \in \Lambda_n$, then $f_I$ is the indicator function of 
the order-increasing subset $A$ of $\Gamma_n$
\[
f_I = \mathbf{1}_{A} \quad \text{ where $A = Supp(f_I)$}.
\]
With the dual relation between order-decreasing subsets (also called order ideals) 
and order-increasing subsets, Theorem \ref{thm:isom-birkoff} is basically 
Birkhoff's representation theorem for distributive lattices (also known as
the fundamental theorem for finite distributive lattices \cite[\S 3.4]{St12}) 
applied to the distributive lattice $L_n$. 
Recall that an element in a lattice is  \emph{join-irreducible}, 
if it is neither the least  element of the lattice nor the join of 
any two smaller elements.
Then, Theorem \ref{thm:isom-birkoff} tells us that the GT poset 
$\Gamma_n$ can be identified with the set $J(L_n)$ of join-irreducible 
elements of $L_n$ with an additional greatest element $z_1^{(n)}$. 
See Figure \ref{fig:join-irred-L4} and 
compare it with Figure \ref{fig:Hasse-GT4}.

\begin{figure}
\centering
\begin{tikzpicture}[scale=.85]
  \node (top) at (-3,2) {$\bullet$};
    \node (1) at (-1,2) {$[1]$};
  \node (12) at (1,2) {$[1,2]$};
  \node (123) at (3,2) {$[1,2,3]$};
  \node (4) at (-2,1) {$[4]$};
  \node (14) at (0,1) {$[1,4]$};
  \node (124) at (2,1) {$[1,2,4]$};  
  \node (34) at (-1,0) {$[3,4]$};
  \node (134) at (1,0) {$[1,3,4]$};
  \node (234) at (0,-1) {$[2,3,4]$};  
  \draw (top) -- (4) -- (1) -- (14) -- (12) -- (124) -- (123);  
 \draw (4) -- (34) -- (14) -- (134) -- (124);
 \draw (34)  -- (234) -- (134);  
\end{tikzpicture}
\caption{The poset consisting of all the join-irreducible 
		elements of $L_4$ and an additional greatest element $\bullet$. 
        The elements decrease along the lines from left to right.}
\label{fig:join-irred-L4}
\end{figure}
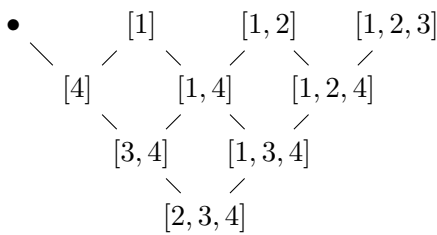

With Birkhoff's theorem,
the greatest column tableau $[n]$ in $L_n$ corresponds to the largest order 
ideal of $J(L_n)$, which is itself. Then, by taking its complement in $J(L_n)$, 
the order-increasing subset of $J(L_n)$ corresponding to $[n]$ is the empty set.
For us,  the column tableau $[n]$ corresponds to the indicator function 
of the singleton set $\{ z_1^{(n)} \} = \Gamma_n \setminus J(L_n)$.
By considering the GT poset $\Gamma_n$ rather than $J(L_n)$, we can reserve 
the indicator function of the empty set defined on $\Gamma_n$ 
for the identity in the monoid $S_n$ we will study in the following section.

\begin{example} \label{ex:col-tab-GT3}
The following gives an order isomorphism between $L_3$ and $\Lambda_3$.
\begin{align*}
\young(1) \leftrightarrow
\begin{tikzpicture}[scale=.55]
\node (31) at (-2,1) {$1$};
\node (32) at (0,1)  {$0$};
\node (33) at (2,1)  {$0$};  
\node (21) at (-1,0) {$1$};
\node (22) at (1,0)  {$0$};
\node (11) at (0,-1) {$1$};  
\draw (31) -- (21) -- (32) -- (22) -- (33);
\draw (21)  -- (11) -- (22);  
\end{tikzpicture}
\qquad
&\young(2) \leftrightarrow
\begin{tikzpicture}[scale=.55]
\node (31) at (-2,1) {$1$};
\node (32) at (0,1)  {$0$};
\node (33) at (2,1)  {$0$};  
\node (21) at (-1,0) {$1$};
\node (22) at (1,0)  {$0$};
\node (11) at (0,-1) {$0$};  
\draw (31) -- (21) -- (32) -- (22) -- (33);
\draw (21)  -- (11) -- (22);  
\end{tikzpicture}
\qquad
\young(3) \leftrightarrow
\begin{tikzpicture}[scale=.55]
\node (31) at (-2,1) {$1$};
\node (32) at (0,1)  {$0$};
\node (33) at (2,1)  {$0$};  
\node (21) at (-1,0) {$0$};
\node (22) at (1,0)  {$0$};
\node (11) at (0,-1) {$0$};  
\draw (31) -- (21) -- (32) -- (22) -- (33);
\draw (21)  -- (11) -- (22);  
\end{tikzpicture} \\
\young(1,2) \leftrightarrow
\begin{tikzpicture}[scale=.55]
\node (31) at (-2,1) {$1$};
\node (32) at (0,1)  {$1$};
\node (33) at (2,1)  {$0$};  
\node (21) at (-1,0) {$1$};
\node (22) at (1,0)  {$1$};
\node (11) at (0,-1) {$1$};  
\draw (31) -- (21) -- (32) -- (22) -- (33);
\draw (21)  -- (11) -- (22);  
\end{tikzpicture}
\qquad
& \young(1,3) \leftrightarrow
\begin{tikzpicture}[scale=.55]
\node (31) at (-2,1) {$1$};
\node (32) at (0,1)  {$1$};
\node (33) at (2,1)  {$0$};  
\node (21) at (-1,0) {$1$};
\node (22) at (1,0)  {$0$};
\node (11) at (0,-1) {$1$};  
\draw (31) -- (21) -- (32) -- (22) -- (33);
\draw (21)  -- (11) -- (22);  
\end{tikzpicture}
\qquad
\young(2,3) \leftrightarrow
\begin{tikzpicture}[scale=.55]
\node (31) at (-2,1) {$1$};
\node (32) at (0,1)  {$1$};
\node (33) at (2,1)  {$0$};  
\node (21) at (-1,0) {$1$};
\node (22) at (1,0)  {$0$};
\node (11) at (0,-1) {$0$};  
\draw (31) -- (21) -- (32) -- (22) -- (33);
\draw (21)  -- (11) -- (22);  
\end{tikzpicture}
\\
&\young(1,2,3) \leftrightarrow
\begin{tikzpicture}[scale=.55]
\node (31) at (-2,1) {$1$};
\node (32) at (0,1)  {$1$};
\node (33) at (2,1)  {$1$};  
\node (21) at (-1,0) {$1$};
\node (22) at (1,0)  {$1$};
\node (11) at (0,-1) {$1$};  
\draw (31) -- (21) -- (32) -- (22) -- (33);
\draw (21)  -- (11) -- (22);  
\end{tikzpicture}
\end{align*}
\end{example}

\bigskip

\section{Affine monoid of GT patterns and Hibi algebra}

In this section, we study the monoid structure of $S_n$, the Hibi algebra on $L_n$,
and their properties.

\subsection{Affine monoid of GT patterns}\label{Sec-SSYT-GT}

The sum of any two order-preserving maps is again order-preserving, and therefore 
the set $S_n$ of all GT patterns can be considered an affine semigroup with respect to 
the usual addition of functions. The zero map is its identity. 

\begin{definition}
The \emph{affine monoid of GT patterns} is the set of all GT patterns
\[
S_n = \left\{ f : \Gamma_n \rightarrow \mathbb{Z}_{\geq 0} \, 
                   | \, \text{$f$ is order-preserving} \right\}
\]
with the usual addition of functions.
\end{definition}

For a semistandard Young tableau $T$, let $I_j$ be its $j$th column 
counting from left to right. Then they form a multichain of $L_n$ 
as we remarked at the end of \S \ref{sec-yd-tab}
\[
I_1 \leq_{tab} I_2 \leq_{tab} \cdots \leq_{tab} I_k.
\]
If we let $\mathbf{1}_{A_j}=f_{I_j}$ be the indicator functions 
corresponding to the column tableaux $I_j$ given in Theorem \ref{thm:isom-birkoff}, 
then they form a multichain of $\Lambda_n$ and their sum
\[
\mathbf{1}_{A_1} + \mathbf{1}_{A_2} + \cdots + \mathbf{1}_{A_k} 
\quad \text{with $A_1 \supseteq A_2 \supseteq \cdots \supseteq A_k$}
\]
is again an order-preserving map. 
This is an extension of the bijection in Theorem \ref{thm:isom-birkoff},
and it gives a correspondence between the multichains of $L_n$ and 
the elements in $S_n$.

\begin{proposition}\label{prop-ssyt-gt}
There is a bijection between semistandard Young tableaux and GT patterns.
\end{proposition}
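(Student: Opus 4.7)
The plan is to construct maps in both directions between semistandard Young tableaux with entries in $\{1,\dots,n\}$ and elements of $S_n$, and show they are mutually inverse; the entire argument is essentially bootstrapped from Theorem~\ref{thm:isom-birkoff} together with the column-to-multichain correspondence discussed at the end of \S\ref{sec-yd-tab}.

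For the forward map, given a semistandard Young tableau $T$, I read off its columns $I_1,I_2,\dots,I_k$ from left to right. As noted in \S\ref{sec-yd-tab}, semistandardness is exactly the condition that $I_1 \leq_{tab} I_2 \leq_{tab} \cdots \leq_{tab} I_k$ is a multichain in $L_n$. Applying the order-isomorphism $I \mapsto \mathbf{1}_{A_I}$ from Theorem~\ref{thm:isom-birkoff}, I obtain a descending chain $A_1 \supseteq A_2 \supseteq \cdots \supseteq A_k$ of order-increasing subsets of $\Gamma_n$, and I define
\[
f_T \;=\; \mathbf{1}_{A_1} + \mathbf{1}_{A_2} + \cdots + \mathbf{1}_{A_k}.
\]
Since each summand is order-preserving, so is $f_T$, and hence $f_T \in S_n$.

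For the inverse, given $f \in S_n$, I consider the level sets $A_j := \{x \in \Gamma_n : f(x) \geq j\}$ for $j \geq 1$. Because $f$ is order-preserving, each $A_j$ is an order-increasing subset of $\Gamma_n$; clearly $A_1 \supseteq A_2 \supseteq \cdots$, and $A_j = \emptyset$ for $j$ large. Let $k$ be the maximum value of $f$. The identity
\[
f(x) \;=\; \#\{\,j \geq 1 : f(x) \geq j\,\} \;=\; \sum_{j=1}^{k} \mathbf{1}_{A_j}(x)
\]
shows $f = \sum_{j=1}^{k} \mathbf{1}_{A_j}$. Running Theorem~\ref{thm:isom-birkoff} in reverse, each nonempty $A_j$ corresponds to a column tableau $I_j \in L_n$, and the reverse-inclusion chain translates to $I_1 \leq_{tab} \cdots \leq_{tab} I_k$, so concatenating these columns yields a semistandard Young tableau $T_f$.

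The main (and really only) obstacle is to verify that these two constructions are mutually inverse, which amounts to checking that the level-set decomposition is the unique expression of $f$ as a sum $\sum_j \mathbf{1}_{A_j}$ coming from a descending chain of order-increasing subsets. This is immediate: the value $\mathbf{1}_{A_j}(x) = 1$ iff $f(x) \geq j$, so the $A_j$ are forced by $f$. Starting from a tableau $T$, passing to $f_T$ and then taking level sets recovers precisely the original chain $A_1 \supseteq \cdots \supseteq A_k$, hence the original columns, and starting from $f$ the roundtrip is tautological by the displayed identity above. This completes the bijection.
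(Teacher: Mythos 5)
Your proof is correct and follows essentially the same route as the paper: the forward map is the column-multichain correspondence combined with Theorem~\ref{thm:isom-birkoff}, and your level-set decomposition $f=\sum_{j=1}^{k}\mathbf{1}_{A_j}$ with $A_j=\{x:f(x)\geq j\}$ is just the unit-coefficient form of the paper's expression $f=\sum_k c_k\mathbf{1}_{A_k}$ over the distinct values of $f$. The only difference is cosmetic, plus you make the mutual-inverse check explicit where the paper leaves it implicit.
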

\begin{proof}
With the above discussion, it is enough to show that every GT pattern can be expressed 
as a sum of linearly ordered elements in $\Lambda_n$. 
Let  $f \in S_n$ and its image be $\{i_1, ..., i_m\}$ with $0 \leq i_1 < i_2 < \cdots < i_m$. 
If we write $A_k$ for the inverse image of $\{ y \in \mathbb{Z}: y \geq i_k \}$ under $f$, 
then they are order-increasing subsets of $\Gamma_n$ 
and satisfy $A_1 \supset A_2 \supset \cdots \supset A_m$. Therefore, their indicator functions 
$\mathbf{1}_{A_k}$  form a multichain of $\Lambda_n$. Now $f$ can be expressed as 
\begin{equation}\label{eq-sum}
f = c_1 \mathbf{1}_{A_1} + c_2 \mathbf{1}_{A_2} + \cdots + c_m \mathbf{1}_{A_m} 
\end{equation}
where $c_1 = i_1$ and  $c_k =i_k - i_{k-1}$ for $2 \leq k \leq m$. 
See Example \ref{ex-ssyt-gt}.
\end{proof}

We remark that there is a well-know bijection between semistandard Young 
tableaux and GT patterns which does not refer to these poset structures. 
For a semistandard Young tableau $T$, we define $f_T: \Gamma_n \rightarrow \mathbb{Z}$ 
by 
\begin{equation} \label{eq-usual-bij}
f_T (z_{j}^{(i)})  \, =  \,
{
\begin{split} 
& \text{\ the number of entries in the $j$th row of $T$} \\
& \text{\ which are less than or equal to $i$}
\end{split}
}
\end{equation}
for all $1 \leq j \leq i \leq n$. Then, the correspondence $T \mapsto f_T$ 
gives a bijection between semistandard Young tableaux and GT patterns. 
This bijection is the same as the one given in Proposition \ref{prop-ssyt-gt} 
in terms of mutichains in the posets $L_n$ and $\Lambda_n$. 

\begin{example}\label{ex-ssyt-gt}
Using the formula \eqref{eq-sum}, we can express
\begin{align*}
\begin{tikzpicture}[scale=.53]
 \node (41) at (-3,2) {$10$};
  \node (42) at (-1,2) {$7$};
   \node (43) at (1,2) {$3$};
    \node (44) at (3,2) {$2$};
 \node (31) at (-2,1) {$7$};
  \node (32) at (0,1) {$7$};
   \node (33) at (2,1) {$2$};  
    \node (21) at (-1,0) {$7$};
  \node (22) at (1,0) {$3$};
   \node (11) at (0,-1) {$3$};  
      \draw (41) -- (31) -- (42) -- (32) -- (43) -- (33) -- (44);  
   \draw (31) -- (21) -- (32) -- (22) -- (33);
   \draw (21)  -- (11) -- (22);  
\end{tikzpicture}
\; = \;  &
\begin{tikzpicture}[scale=.53]
 \node (41) at (-3,2) {$2$};
  \node (42) at (-1,2) {$2$};
   \node (43) at (1,2) {$2$};
    \node (44) at (3,2) {$2$};
 \node (31) at (-2,1) {$2$};
  \node (32) at (0,1) {$2$};
   \node (33) at (2,1) {$2$};  
    \node (21) at (-1,0) {$2$};
  \node (22) at (1,0) {$2$};
   \node (11) at (0,-1) {$2$};  
      \draw (41) -- (31) -- (42) -- (32) -- (43) -- (33) -- (44);  
   \draw (31) -- (21) -- (32) -- (22) -- (33);
   \draw (21)  -- (11) -- (22);  
\end{tikzpicture}
\: + \; 
\begin{tikzpicture}[scale=.53]
 \node (41) at (-3,2) {$1$};
  \node (42) at (-1,2) {$1$};
   \node (43) at (1,2) {$1$};
    \node (44) at (3,2) {$0$};
 \node (31) at (-2,1) {$1$};
  \node (32) at (0,1) {$1$};
   \node (33) at (2,1) {$0$};  
    \node (21) at (-1,0) {$1$};
  \node (22) at (1,0) {$1$};
   \node (11) at (0,-1) {$1$};  
      \draw (41) -- (31) -- (42) -- (32) -- (43) -- (33) -- (44);  
   \draw (31) -- (21) -- (32) -- (22) -- (33);
   \draw (21)  -- (11) -- (22);  
\end{tikzpicture} 
\\
& 
\; + \; 
\begin{tikzpicture}[scale=.53]
 \node (41) at (-3,2) {$4$};
  \node (42) at (-1,2) {$4$};
   \node (43) at (1,2) {$0$};
    \node (44) at (3,2) {$0$};
 \node (31) at (-2,1) {$4$};
  \node (32) at (0,1) {$4$};
   \node (33) at (2,1) {$0$};  
    \node (21) at (-1,0) {$4$};
  \node (22) at (1,0) {$0$};
   \node (11) at (0,-1) {$0$};  
      \draw (41) -- (31) -- (42) -- (32) -- (43) -- (33) -- (44);  
   \draw (31) -- (21) -- (32) -- (22) -- (33);
   \draw (21)  -- (11) -- (22);  
\end{tikzpicture}
\; + \: 
\begin{tikzpicture}[scale=.53]
 \node (41) at (-3,2) {$3$};
  \node (42) at (-1,2) {$0$};
   \node (43) at (1,2) {$0$};
    \node (44) at (3,2) {$0$};
 \node (31) at (-2,1) {$0$};
  \node (32) at (0,1) {$0$};
   \node (33) at (2,1) {$0$};  
    \node (21) at (-1,0) {$0$};
  \node (22) at (1,0) {$0$};
   \node (11) at (0,-1) {$0$};  
      \draw (41) -- (31) -- (42) -- (32) -- (43) -- (33) -- (44);  
   \draw (31) -- (21) -- (32) -- (22) -- (33);
   \draw (21)  -- (11) -- (22);  
\end{tikzpicture}
\end{align*}
Then, this GT pattern corresponds to the following multichain of $L_4$
\[
\young(1,2,3,4) \leq_{tab} \young(1,2,3,4)  \leq_{tab}  \young(1,2,4)  \leq_{tab}  \young(2,3)  \leq_{tab}
 \young(2,3)  \leq_{tab}  \young(2,3) \leq_{tab}  \young(2,3)  \leq_{tab}\young(4) \leq_{tab}  \young(4)  \leq_{tab}  \young(4)
\]
or equivalently the semistandard Young tableau
\[
\young(1112222444,2223333,334,44).
\]
\end{example}

\subsection{Hibi algebra and affine monoid algebra}
 
In \cite{Hi87}, Hibi introduced an algebra $\mathcal{H}_L$ attached to 
a finite lattice $L$, now called the \emph{Hibi algebra} on $L$. 
It is the quotient of the polynomial ring with variables $x_{\alpha}$ 
indexed by $\alpha \in L$ 
by the ideal $\mathcal{I}_L$ generated by the binomials
$x_{\alpha} x_{\beta} - x_{\alpha \wedge \beta} x_{\alpha \vee \beta}$ 
for all incomparable pairs $(\alpha, \beta)$ in $L$.

Among many others, it is shown that if $L$ is a distributive lattice 
then $\mathcal{H}_L$ is an algebra with straightening laws on $L$ and therefore 
all the monomials which are not divisible by $x_{\alpha} x_{\beta}$ 
for any incomparable pairs  $(\alpha, \beta)$ in $L$ form 
a $\mathbb{C}$-basis for $\mathcal{H}_L$.
We would like to study the Hibi algebra on the distributive lattice $L_n$ 
of column tableaux
\[
\mathcal{H}_n = \mathbb{C}[x_{I} : I \in L_n] / \mathcal{I}_{L_n}.  
\]

On the other hand, we have the affine monoid algebra $\mathbb{C}[S_n]$ of 
the monoid $S_n$ of GT patterns. 
Note that the following identities hold
\begin{equation}\label{eq-sum-j-m22}
\mathbf{1}_{A} + \mathbf{1}_{B}  
= \mathbf{1}_{A \cup B} + \mathbf{1}_{A\cap B}  
\end{equation}
for all pairs $(A,B)$ of incompariable order-increasing subsets of $\Gamma_n$, 
and therefore, with \eqref{join-meet-indicator}, the map 
\[
\psi:  \mathcal{H}_n \longrightarrow \mathbb{C}[S_n]
\] 
sending $x_I$ to $f_I$ for all $I \in L_n$ is well-defined. 
Indeed it gives an algebra isomorphism. See \cite[\S 2]{Hi87}. 
See also \cite[\S 3.3]{Ho-Weyl} and \cite[\S 2.3]{Ki08}.
 
\begin{proposition}\label{prop-Hibi-Sn}
The affine monoid algebra $\mathbb{C}[S_n]$ of $S_n$ is isomorphic to the 
Hibi algebra on $L_n$.
\end{proposition}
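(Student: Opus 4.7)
The plan is to realize the proposed map at the level of the polynomial ring, check that it kills the generators of the Hibi ideal, and then match up natural bases on the two algebras. First I would define the algebra homomorphism $\tilde\psi \colon \mathbb{C}[x_I : I \in L_n] \to \mathbb{C}[S_n]$ by $\tilde\psi(x_I) = f_I$, where $f_I = \mathbf{1}_{A_I}$ is the indicator function associated to $I$ via Theorem \ref{thm:isom-birkoff} and is viewed as the corresponding basis element of the monoid algebra (multiplication of such basis elements in $\mathbb{C}[S_n]$ being addition in $S_n$). To see that $\tilde\psi$ annihilates the Hibi generator $x_I x_J - x_{I \wedge J} x_{I \vee J}$ for any incomparable pair $(I,J)$ in $L_n$, I would combine \eqref{join-meet-indicator} with \eqref{eq-sum-j-m22}, using that Theorem \ref{thm:isom-birkoff} reverses inclusion so that join in $L_n$ corresponds to intersection of supports and meet to union; this yields
\[
f_I + f_J \;=\; \mathbf{1}_{A_I} + \mathbf{1}_{A_J} \;=\; \mathbf{1}_{A_I \cup A_J} + \mathbf{1}_{A_I \cap A_J} \;=\; f_{I \wedge J} + f_{I \vee J}.
\]
Hence $\tilde\psi(x_I x_J) = \tilde\psi(x_{I \wedge J} x_{I \vee J})$, so $\tilde\psi$ descends to a well-defined algebra homomorphism $\psi \colon \mathcal{H}_n \to \mathbb{C}[S_n]$.

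Surjectivity is immediate from Proposition \ref{prop-ssyt-gt}: each $f \in S_n$ admits a decomposition $f = \sum_{k=1}^m c_k \mathbf{1}_{A_k}$ along a strictly descending chain $A_1 \supset A_2 \supset \cdots \supset A_m$ of order-increasing subsets, which translates to a strictly ascending chain $I_1 <_{tab} I_2 <_{tab} \cdots <_{tab} I_m$ in $L_n$ with $f = \psi(x_{I_1}^{c_1} x_{I_2}^{c_2} \cdots x_{I_m}^{c_m})$.

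The main obstacle is injectivity, where the ASL structure on $\mathcal{H}_n$ carries the weight of the argument. Since $L_n$ is a distributive lattice, the paragraph preceding the proposition (citing Hibi \cite{Hi87}) provides that $\mathcal{H}_n$ is an algebra with straightening laws on $L_n$, so the \emph{standard monomials} $x_{I_1}^{c_1} \cdots x_{I_m}^{c_m}$ indexed by strict chains $I_1 <_{tab} \cdots <_{tab} I_m$ in $L_n$ with positive exponents $c_k$ form a $\mathbb{C}$-basis of $\mathcal{H}_n$. Under $\psi$ such a standard monomial is sent to $c_1 f_{I_1} + \cdots + c_m f_{I_m} \in S_n$, and the uniqueness of the chain decomposition in Proposition \ref{prop-ssyt-gt} — one recovers the exponents $c_k$ and the sets $A_k = Supp(f_{I_k})$ as the successive jumps of the image values of $f$ — shows that this assignment is a bijection onto the natural basis $\{f : f \in S_n\}$ of $\mathbb{C}[S_n]$. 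Hence $\psi$ carries a basis bijectively to a basis and is therefore an isomorphism of $\mathbb{C}$-algebras.
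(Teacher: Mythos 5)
Your proposal is correct and follows the same route as the paper: the same map $x_I \mapsto f_I$, with well-definedness checked via \eqref{join-meet-indicator} and \eqref{eq-sum-j-m22}, exactly as in the paragraph preceding Proposition \ref{prop-Hibi-Sn}. The part the paper delegates to Hibi \cite{Hi87} — that $\psi$ is an isomorphism — you supply by matching the standard-monomial basis of $\mathcal{H}_n$ (guaranteed by the ASL structure on the distributive lattice $L_n$) with the unique chain decomposition of GT patterns from Proposition \ref{prop-ssyt-gt}, which is precisely the standard argument the cited reference carries out.
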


For a Young diagram $\lambda=(\lambda_1, ..., \lambda_n)$, 
let $\mathbb{C}[S_n]_{\lambda}$ denote the set of all formal linear combinations of 
GT patterns $f$ such that
\begin{equation}\label{eq-GT-top}
f(z_j^{(n)}) = \lambda_j \quad \text{\ for all $1\leq j \leq n$}.
\end{equation}
Then, the affine monoid algebra $\mathbb{C}[S_n]$ is multigraded by 
Young diagrams
\[
\mathbb{C}[S_n] = \bigoplus_{\lambda}\mathbb{C}[S_n]_{\lambda}
\]
with $\mathbb{C}[S_n]_{\lambda} \cdot \mathbb{C}[S_n]_{\mu} 
                    \subseteq \mathbb{C}[S_n]_{\lambda + \mu}$.
Similarly, $\mathcal{H}_n$ is multigraded by Young diagrams
\[
\mathcal{H}_n = \bigoplus_{\lambda}(\mathcal{H}_n)_{\lambda}
\]
and in this case,   
once monomials $\prod_{j=1}^r x_{I_j}$ are identified with multisubsets 
$\{I_1, ..., I_r\}$ of $L_n$, by Proposition \ref{prop-ssyt-gt},
the space $(\mathcal{H}_n)_{\lambda}$ consists of all formal 
linear combinations of semistandard Young tableaux on the Young diagram $\lambda$.

We remark that for each Young diagram $\lambda$, 
there is a finite dimensional irreducible representation $V^{\lambda}_n$
of the general linear group $GL_n(\mathbb{C})$.
Such a representation has  a $\mathbb{C}$-basis which can be labeled by
GT patterns satisfying \eqref{eq-GT-top}, or equivalently, 
by semistandard Young tableaux on the Young diagram $\lambda$.
Therefore, we can think of $\mathbb{C}[S_n]_{\lambda}$ and $(\mathcal{H}_n)_{\lambda}$ as 
combinatorial models of the representation space $V^{\lambda}_n$. 
We will be more precise about it in the next section.

\bigskip


\section{Flag algebra and representations of $GL_n$}\label{sec-alg-Lmn}

Let us consider the complex general linear group $GL_n=GL_n(\mathbb{C})$, that is, the group of 
complex $n \times n$ invertible matrices with matrix multiplication. We will construct algebras 
carrying polynomial representations of $GL_n$.

\subsection{Representations of $GL_n$}\label{sec-repn-gl}
Let us begin with some basic concepts of representation theory. 
For more details, we refer the reader to   \cite{GW09} especially, \S 1.5, \S 2.1, and \S 3.2. 

A \emph{representation} of a group $G$ on a vector space $V$ (over $\mathbb{C}$ in this paper)  
is a group homomorphism $\phi$ from $G$ to the group of all  automorphisms of $V$.
Then, $G$ acts on $V$ by 
\[
g\cdot v  = \phi(g) v \quad \text{for $g \in G$ and $v\in V$}.
\]
When such an action is understood, we often say $V$ is a representation of $G$. 
A representation $V$ is \emph{irrducible} 
if it is a nonzero representation that has no proper subrepresentation 
 closed under the action of $G$. 
We will focus on 
the \emph{polynomial representation} of $GL_n$, which means that 
the matrix coefficients of $\phi(g)$ of a typical element 
$g=(g_{ij}) \in GL_n$ are polynomials generated by $g_{ij}$.

We let $A_n$ be the \emph{maximal torus} of $GL_n$ consisting of all  invertible diagonal matrices. 
A \emph{character} of $A_n$ is a regular homomorphism 
\[
\psi^{\kappa}_n: A_n \longrightarrow \mathbb{C}^{\times} \quad \text{defined by \ }
 \psi_n^{\kappa}({a})= a_1^{\kappa_1} a_2^{\kappa_2} \cdots a_n^{\kappa_n}
\]
for some $\kappa=(\kappa_1, \kappa_2, ..., \kappa_n) \in \mathbb{Z}^n$. 
Here, ${a}=diag(a_{11}, ..., a_{nn}) \in A_n$ is the diagonal matrix 
with diagonal entries $a_{11}, a_{22}, ..., a_{nn}$. 
We call $\psi^{\kappa}_n$ a \emph{polynomial dominant character}, if 
$\kappa_1 \geq \kappa_2 \geq \cdots \geq \kappa_n \geq 0$.
Note that the set of all polynomial dominant characters form a semigroup
\[
\hat{A}^+_n = \left\{ \psi_n^{\kappa} : \kappa_1 \geq \kappa_2 \geq \cdots \geq \kappa_n \geq 0 \right\}
\quad \text{with \ }  \psi_n^{\alpha} \cdot  \psi_n^{\beta} =  \psi_n^{\alpha + \beta}.
\]
Every polynomial representation of $GL_n$ has a $\mathbb{C}$-basis, 
called \emph{weight basis}, consisting of vectors $v$ satisfying
\[
{a}\cdot v = \psi_n^{\kappa}({a}) v  \quad \text{for all $a \in {A_n}$}
\] 
for some $\kappa =(\kappa_1, \kappa_2, ..., \kappa_n) \in \mathbb{Z}^n$ 
such that $\kappa_i \geq 0$ for all $i$. 
Such a vector $v$ is called a \emph{weight vector} of \emph{weight} $\psi^{\kappa}_n$.

Now we let $U_n$ be the \emph{maximal unipotent  
subgroup} of $GL_n$ consisting of all upper triangular matrices with $1$'s on the diagonal. 
Let us write $V^{U_n}$  for the subspace of a polynomial representation $V$ of $GL_n$ 
consisting of all vectors invariant under the action of $U_n$.  
Since $A_n$ normalizes $U_n$, the action of $A_n$ will leave $V^{U_n}$ invariant.
Moreover, if $V$ is irreducible, then Theorem of the Highest Weight shows that 
$V^{U_n}$ is a one-dimensional subspace of $V$ spanned by a weight vector 
of weight  $ \psi_n^{\kappa} \in \hat{A}^+_n$ 
and  determines the representation $V$ up to equivalence. 
In this case, we call a vector $v \in V^{U_n}$ a \emph{highest weight vector} for $V$ 
and $\psi_n^{\kappa}$ the \emph{highest weight} of $V$.
Characters of $A_n$ occurring as the highest weights of irreducible polynomial representations 
of $GL_n$ are exactly polynomial dominant characters.

Therefore,  we can associate each irreducible polynomial representation of $GL_n$
with a polynomial dominant character $\psi_n^{\kappa}$ or equivalently 
a sequence $\kappa$ of non-increasing non-negative integers. 
We can further identify such a sequence $\kappa$ with a Young diagram as given in \S \ref{sec-yd-tab}, 
and this establishes a one-to-one correspondence between  irreducible polynomial representations of $GL_n$ 
and Young diagrams with not more than $n$ rows.
From now on, for a Young diagram $\lambda$ with $\mathbf{d}(\lambda) \leq n$, we let $V^{\lambda}_n$ 
denote the irreducible polynomial representation with highest weight $\psi_n^{\lambda}$.

\subsection{Weight basis and GT patterns}\label{sec-weight-basis}

There is a nice labeling system for weight basis elements for $V^{\lambda}_n$.
First, we recall a simple branching rule. See \cite[\S 8]{GW09}. 

\begin{lemma}\label{lem-Pieri}
For a Young diagrams $\mu=(\mu_1, ..., \mu_k)$, 
the irreducible representation $V^{\mu}_k$ of $GL_k$, 
under the restriction of  $GL_k$ down to its block diagonal subgroup $GL_{k-1} \times GL_1$, 
decomposes in a multiplicity-free fashion 
\[
V^{\mu}_k = \bigoplus_{\nu} V^{\nu}_{k-1} \otimes V^{(r)}_1
\]
where the sum is over Young diagrams $\nu=(\nu_1, ..., \nu_{k-1})$ interlacing $\mu$, i.e.
\[
\mu_1 \geq \nu_1 \geq \mu_2 \geq \nu_2 \geq \cdots \geq \nu_{k-1} \geq \mu_k
\]
and $r = \sum_{j=1}^k \mu_j  - \sum_{j=1}^{k-1} \nu_j$.
\end{lemma}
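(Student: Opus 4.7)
The plan is to establish the branching rule via characters. Since both sides of the claimed decomposition are polynomial representations of $GL_{k-1}\times GL_1$, and such representations are completely reducible, it suffices to show that their characters on the maximal torus $A_{k-1}\times A_1$ coincide, and to verify that each summand $V^\nu_{k-1}\otimes V^{(r)}_1$ can appear with multiplicity at most one. I would take as input the standard fact (which fits naturally into the tableau framework of Section 1) that the character of $V^\lambda_m$ on $A_m$ is the Schur polynomial $s_\lambda(x_1,\ldots,x_m)$, where $x_i$ is the weight that sends $\operatorname{diag}(a_{11},\ldots,a_{mm})$ to $a_{ii}$. Then the character of $V^\nu_{k-1}\otimes V^{(r)}_1$ on $A_{k-1}\times A_1\subset A_k$ is $s_\nu(x_1,\ldots,x_{k-1})\cdot x_k^{r}$.

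Thus the lemma reduces to the combinatorial identity
\begin{equation*}
s_\mu(x_1,\ldots,x_k) \;=\; \sum_{\nu} s_\nu(x_1,\ldots,x_{k-1})\, x_k^{|\mu|-|\nu|},
\end{equation*}
where the sum runs over Young diagrams $\nu$ interlacing $\mu$ in the sense of the lemma. To prove this, I would exploit the SSYT model for Schur polynomials, $s_\mu(x_1,\ldots,x_k)=\sum_T x^T$, where $T$ ranges over semistandard tableaux of shape $\mu$ with entries in $\{1,\ldots,k\}$ and $x^T=\prod_i x_i^{c_i(T)}$. I would then stratify this sum according to the sub-tableau $T'$ obtained by erasing every box of $T$ that contains the entry $k$. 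Removing these boxes yields an SSYT $T'$ of some shape $\nu\subseteq\mu$ with entries in $\{1,\ldots,k-1\}$, and the erased boxes occupy the skew shape $\mu/\nu$ with $|\mu/\nu|=|\mu|-|\nu|$ being the number of $k$'s in $T$. Since columns of $T$ strictly increase, the boxes labelled $k$ form a \emph{horizontal strip} (at most one box per column of $\mu/\nu$), and conversely any $(T',\mu/\nu)$ with this property recombines into a unique $T$.

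The only genuine content is the equivalence ``$\mu/\nu$ is a horizontal strip $\Longleftrightarrow$ $\nu$ interlaces $\mu$'': containment $\nu\subseteq\mu$ gives $\mu_i\geq\nu_i$, and the at-most-one-box-per-column condition is equivalent to $\nu_i\geq\mu_{i+1}$ for every $i$, after a short check by reading off column lengths via conjugation. I expect this translation — together with the bookkeeping to see that each admissible $\nu$ contributes exactly once — to be the main obstacle; everything else is organizational. Once the identity is in hand, the multiplicities read off both sides are $0$ or $1$, so the restriction is multiplicity-free, and comparing characters (which determine polynomial $GL_{k-1}\times GL_1$-modules up to isomorphism) yields the asserted decomposition with $r=\sum_{j=1}^k\mu_j-\sum_{j=1}^{k-1}\nu_j$.
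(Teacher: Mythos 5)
Your argument is correct. Note, however, that the paper does not actually prove Lemma \ref{lem-Pieri}: it is recalled as a known branching rule with a citation to \cite[\S 8]{GW09}, where it is obtained from the general theory of highest weights and characters for $GL_k$. Your route is the standard symmetric-function proof: granting that the character of $V^{\lambda}_m$ on the diagonal torus is the Schur polynomial $s_{\lambda}$, you reduce the lemma to the identity $s_{\mu}(x_1,\ldots,x_k)=\sum_{\nu}s_{\nu}(x_1,\ldots,x_{k-1})\,x_k^{|\mu|-|\nu|}$ over interlacing $\nu$, proved by stripping the boxes labelled $k$ from a semistandard tableau and checking the horizontal-strip/interlacing equivalence; complete reducibility and linear independence of the characters $s_{\nu}(x_1,\ldots,x_{k-1})x_k^{r}$ then give the multiplicity-free decomposition. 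This is sound, and it has the virtue of meshing with the tableau combinatorics of Section 1 (indeed it is essentially the $k$-th step of the bijection between semistandard tableaux and GT patterns used in \S\ref{sec-weight-basis}); the cost is that it presupposes the Schur-character fact, which in \cite{GW09} is itself established alongside the branching rule, so as a self-contained derivation it is no shorter than the cited one — but as a proof it is complete and correct modulo that standard input.
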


For each GT pattern $f \in S_n$, let us write 
\[
\lambda[k]=(f(z^{(k)}_1), f(z^{(k)}_2), ..., f(z^{(k)}_i)) \text{\ for $1\leq k \leq n$} \quad \text{and}
\quad \lambda=\lambda[n].
\]
Then, $f$ can encode the successive applications of  
Lemma \ref{lem-Pieri} for $k=n, n-1, ..., 2$. 
Since every irreducible polynomial representation of $GL_1$ is one-dimensional, 
we can find a vector $v_f$ in the chain of spaces
\begin{align*}
 V_n^{\lambda[n]} \supset &   \left( V_{n-1}^{\lambda[n-1]} \otimes V_1^{(\kappa_n)} \right) \supset
          \left( V_{n-2}^{\lambda[n-2]} \otimes V_1^{(\kappa_{n-1})} \otimes V_1^{(\kappa_n)} \right)  \\
& \supset \cdots 
\supset \left(V_1^{(\kappa_1)} \otimes V_1^{(\kappa_2)} \otimes \cdots \otimes V_1^{(\kappa_n)} \right)
\end{align*}
where
\[
\kappa_i = \sum_{j=1}^{i} f(z_j^{(i)}) - \sum_{j=1}^{i-1} f(z_j^{(i-1)}) 
\text{\ for $2 \leq i \leq n$} \quad \text{and} \quad \kappa_1 = f(z_1^{(1)}),
\]
and therefore $(\kappa_1)=\lambda[1]$. The vector $v_f$ is stable under the action of 
$A_n \cong GL_1 \times \cdots \times GL_1$ ($n$ times) 
with weight $ \psi^{\kappa}_n$ where $\kappa=(\kappa_1, ..., \kappa_n)$.

This gives a one-to-one correspondence between the set of GT patterns  
satisfying \eqref{eq-GT-top} and a weight basis for $V^{\lambda}_n$. See \cite{GT50, Mo06} for more details. 
With the correspondence \eqref{eq-usual-bij},
we can also label weight basis elements with semistandard Young tableaux. 
In this case, semistandard Young tableaux on a Young diagram $\lambda$ with content $\kappa$ 
corresponds to weight basis elements in $V^{\lambda}_n$ with weight $\psi^{\kappa}_n$. 
We refer the reader to \cite[\S 8.1]{GW09}.

\subsection{Flag algebra for $GL_n$}

To construct an algebra carrying irreducible representations of $GL_n$, we recall 
the \emph{$GL_n$-$GL_m$ duality} (see \cite[\S 9.2]{GW09} and \cite{Ho95}). 
Let us write $\mathbb{C}[M_{n,m}]$ for the ring of polynomials on 
the space $M_{n,m}$ of $n \times m$ complex matrices. 
We use the coordinates $x_{ab}$ to write a typical element $X \in M_{n,m}$
\[
X= \begin{bmatrix}
x_{11} &  x_{12} & \cdots & x_{1m} \\
x_{21} &  x_{22} & \cdots & x_{2m}      \\
\vdots &   \vdots & \ddots & \vdots \\
x_{n1} &  x_{n2} & \cdots & x_{nm} 
\end{bmatrix}
\]
and let the group $GL_n \times GL_m$ act on $h \in \mathbb{C}[M_{n,m}]$ by 
\[
((g_1, g_2) \cdot h )(X) = h(g_1^t X g_2)
\]
for $(g_1,g_2)\in GL_n \times GL_m$ and  $X  \in M_{n,m}$. Then,  
as a $GL_n \times GL_m$ representation, the algebra $\mathbb{C}[M_{n,m}]$ decomposes as 
\[
\mathbb{C}[M_{n,m}] \cong \bigoplus_{\lambda} V_n^{\lambda} {\otimes} V_m^{\lambda}
\]
where the summation runs over Young diagrams $\lambda$ 
with not more than $\min(n,m)$ rows.

Now we let $n \geq m$ and consider 
the subring $\mathcal{R}_{n,m}$ of $\mathbb{C}[M_{n,m}]$ 
consisting of all polynomials invariant under the action of $1 \times U_m$, 
\begin{align*}
\mathcal{R}_{n,m} 
& = \left\{ h \in \mathbb{C}[M_{n,m}]: h(Xu) = h(X) \text{\ for all $u \in U_m$}\right\}\\
& \cong \bigoplus_{\mathbf{d}(\lambda)\leq m} 
                      V^{\lambda}_n \otimes \left( V^{\lambda}_m \right)^{U_m}.
\end{align*}
We will call $\mathcal{R}_{n,m}$ the \emph{flag algebra} for $GL_n$. 
Since $V^{\lambda}_m$ is irreducible, $\left( V^{\lambda}_m \right)^{U_m}$ is 
one-dimensional. Therefore, $\mathcal{R}_{n,m}$ contains exactly one copy of every 
polynomial representation $V^{\lambda}_n$ of $GL_n$ 
for $\lambda$ with $\mathbf{d}(\lambda)\leq m$. 

Let us write $\mathcal{R}^{\lambda}_{n,m}$ for the space of weight vectors in $\mathcal{R}_{n,m}$ 
with weight $\psi_m^{\lambda}$ under the right action of $A_m$. 
Then we obtain the graded algebra structure of $\mathcal{R}_{n,m}$ 
with respect to the semigroup $\hat{A}_m^+$
\[
\mathcal{R}_{n,m} = \bigoplus_{ \psi^{\lambda}_m \in \hat{A}_m^+} 
                    \mathcal{R}^{\lambda}_{n,m}
\]
where $\mathcal{R}^{\lambda}_{n,m} \cong V^{\lambda}_n$ 
as a representation of $GL_n$ and  $\mathcal{R}^{\lambda}_{n,m} \cdot \mathcal{R}^{\lambda}_{n,m} 
\subseteq \mathcal{R}^{\lambda + \mu}_{n,m}$.

\subsection{Standard monomial basis}

Let us review a presentation of the flag algebra $\mathcal{R}_{n,m}$. 
We are particularly interested in weight bases for  
the individual homogeneous spaces $\mathcal{R}^{\lambda}_{n,m}$ of $\mathcal{R}_{n,m}$ 
under the left action of $A_n$.
Let us consider the subposet $L_{n,m}$ of $L_n$ consisting of 
all column tableaux of depth at most $m$
\[
L_{n,m}  = \left\{ I \in L_n : \text{\ the depth of $I$ is not more than $m$} \right\}. 
\]
For a column tableau $I=[i_1, i_2, ..., i_k] \in L_{n,m}$, 
we define a function $\delta_I$ on $M_{n,m}$ by the determinant of 
the submatrix of $X=(x_{ab}) \in M_{n,m}$ obtained by selecting the rows 
$i_1, i_2, ..., i_k$ and columns $1, 2, ..., k$ of $X$
\begin{equation}\label{eq-det-delta}
\delta_I (X) = \det
\begin{bmatrix}
x_{i_1 1} & x_{i_1 2} & \cdots & x_{i_1 k} \\
x_{i_2 1} & x_{i_2 2} & \cdots & x_{i_2 k} \\
\vdots     &   \vdots     & \ddots & \vdots \\
x_{i_k 1} & x_{i_2 2} & \cdots & x_{i_k k} 
\end{bmatrix}.
\end{equation}
It is easy to check that $\delta_I \in  \mathbb{C}[M_{n,m}]$ are 
invariant under the action of $1\times U_m$  and therefore the product 
$\prod_j \delta_{I_j}$ of any finite number of such elements 
belong to $\mathcal{R}_{n,m}$. 

\begin{definition}
Let $\{I_1, ..., I_r\}$ be a multisubset of $L_{n,m}$ such that 
the depth of $I_j$ is $d_j$ and $d_1 \geq d_2 \geq \cdots \geq d_r$.
\begin{enumerate}
\item 
The \emph{shape} of the product $\prod_{j=1}^r \delta_{I_j}$ is 
the Young diagram obtained by taking the transpose of $(d_1, ..., d_r)$.
\item 
The product $\prod_{j=1}^r \delta_{I_j}$ is 
called a \emph{standard monomial} if it is not divisible by $\delta_I \delta_J$ 
for any incomparable pairs $(I,J)$ in $L_{n,m}$. Therefore, its indices  
form a multichain of the poset $L_{n,m}$ and we write $\Delta_T$ for 
$\prod_{j=1}^r \delta_{I_j}$ where
\[
 T = ( I_1 \leq_{tab} I_2 \leq_{tab} \cdots \leq_{tab} I_r ).
\]
\end{enumerate}
\end{definition}

Note that every product $\Delta=\prod_{j=1}^r \delta_{I_j}$ is a weight vector 
under the right action of $A_m$. 
The weight of $\Delta$ is $\psi_m^{\lambda}$ 
(therefore $\Delta \in \mathcal{R}^{\lambda}_{n,m}$) if and only if 
the shape of  $\Delta$ is $\lambda$. 
This shows that the space $\mathcal{R}^{\lambda}_{n,m}$  is spanned by 
the products $\Delta$ whose shapes are $\lambda$.

\begin{theorem}\label{thm-smt}
For each Young diagram $\lambda$ with $\mathbf{d}(\lambda) \leq m$,
standard monomials of shape $\lambda$ form a $\mathbb{C}$-basis 
for the homogeneous component $\mathcal{R}^{\lambda}_{n,m}$ of $\mathcal{R}_{n,m}$.
\end{theorem}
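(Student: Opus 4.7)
The plan is to establish the theorem in two classical steps: first, that standard monomials of shape $\lambda$ span $\mathcal{R}^\lambda_{n,m}$, and second, that they are linearly independent. The spanning step will itself subdivide into showing that the minors $\delta_I$ generate $\mathcal{R}_{n,m}$ as an algebra, followed by a straightening reduction. Linear independence will follow from a dimension count against the Gelfand-Tsetlin parametrization discussed in Section \ref{sec-weight-basis}.

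For generation, I would invoke the first fundamental theorem of classical invariant theory for the right action of $GL_m$ on $\mathbb{C}[M_{n,m}]$, restricted to its maximal unipotent subgroup $U_m$: the $1 \times U_m$-invariants are generated as a $\mathbb{C}$-algebra by the leading-column minors $\delta_I$ for $I \in L_{n,m}$. Each such $\delta_I$ is a weight vector for the right $A_m$-action of weight $\psi_m^{(1^k)}$, where $k$ is the depth of $I$, so a product $\prod_j \delta_{I_j}$ lies in $\mathcal{R}^\lambda_{n,m}$ precisely when its shape in the sense of the preceding definition equals $\lambda$. Hence $\mathcal{R}^\lambda_{n,m}$ is spanned by all such products, standard or not.

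The straightening step uses the quadratic Plücker relations: for any incomparable pair $I, J \in L_{n,m}$ there is an identity $\delta_I \delta_J = \delta_{I \vee J}\,\delta_{I \wedge J} + \sum c_{KL}\, \delta_K \delta_L$ in which every pair $(K,L)$ on the right is strictly smaller than $(I,J)$ in a fixed monomial order refining $\geq_{tab}$, and in which every summand has the same multiset of column depths as the left-hand side. The depth-preservation follows directly from the meet/join formulas of Section \ref{sec-yd-tab}, which show that $\{I \vee J, I \wedge J\}$ carries the same depth multiset as $\{I, J\}$; the claim on the non-leading Plücker terms is the standard Hodge-style bound. Noetherian induction on the term order then rewrites any product of shape $\lambda$ as a $\mathbb{C}$-linear combination of standard monomials of the same shape $\lambda$, establishing spanning.

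For linear independence, I would compare dimensions. Via the multichain-to-tableau bijection at the end of Section \ref{sec-yd-tab} (and its extension in Proposition \ref{prop-ssyt-gt}), standard monomials of shape $\lambda$ correspond bijectively to multichains in $L_{n,m}$ with depth sequence $\lambda^t$, hence to semistandard Young tableaux on $\lambda$ with entries in $\{1,\ldots,n\}$ (the hypothesis $\mathbf{d}(\lambda) \leq m$ ensures every column stays in $L_{n,m}$). By the weight-basis discussion of Section \ref{sec-weight-basis}, the number of such tableaux equals $\dim V^\lambda_n = \dim \mathcal{R}^\lambda_{n,m}$. Together with the spanning step, this forces the standard monomials of shape $\lambda$ to form a $\mathbb{C}$-basis for $\mathcal{R}^\lambda_{n,m}$. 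The principal obstacle is the straightening step: verifying the depth-preserving, term-order-decreasing form of the Plücker relations; this is classical but requires some genuine combinatorial bookkeeping on the ordered-column structure of $L_{n,m}$.
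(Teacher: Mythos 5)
Your argument is correct, and the spanning half follows the paper's route: both reduce to the straightening relation $\delta_I\delta_J=\delta_{I\vee J}\delta_{I\wedge J}+\sum c_{E,F}\,\delta_E\delta_F$ for incomparable pairs, applied repeatedly to rewrite an arbitrary product of shape $\lambda$ as a combination of standard monomials of the same shape (you make explicit two points the paper leaves implicit: that the $\delta_I$ generate $\mathcal{R}_{n,m}$, via the first fundamental theorem for $1\times U_m$, and that shape is preserved because each term is a right $A_m$-weight vector and the depth multiset is determined by the weight). Where you genuinely diverge is linear independence. The paper's proof orders the variables $x_{ab}$, observes that $in(\delta_I)$ is the diagonal term of the minor, and shows that distinct standard monomials of shape $\lambda$ have distinct initial monomials under $\geq_{glex}$; this is self-contained on the commutative-algebra side and, importantly, is exactly the input reused in the next subsection to get the SAGBI basis property and the flat (toric) degeneration of $\mathcal{R}_{n,m}$ to $in(\mathcal{R}_{n,m})$. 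Your proof instead counts: standard monomials of shape $\lambda$ biject with multichains in $L_{n,m}$ of depth sequence $\lambda^t$, hence with semistandard tableaux on $\lambda$, hence (via \S\ref{sec-weight-basis} and Proposition \ref{prop-ssyt-gt}) with a weight basis of $V^\lambda_n$, while $GL_n$--$GL_m$ duality gives $\dim\mathcal{R}^\lambda_{n,m}=\dim V^\lambda_n$; a spanning set of the right cardinality is a basis. This is legitimate and not circular within the paper's framework, since the Gelfand--Tsetlin parametrization rests on the branching rule (Lemma \ref{lem-Pieri}) and not on standard monomial theory, but it trades inputs and outputs: in your version the Corollary following Theorem \ref{thm-smt} (dimension $=$ number of semistandard tableaux) becomes a hypothesis rather than a consequence, and you must import the representation-theoretic dimension count, whereas the paper gets that corollary for free and, through the initial-monomial computation, also prepares the toric degeneration. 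One small caution on your straightening step: what one actually has is the Hodge-type condition $E\leq_{tab} I\wedge J\leq_{tab} I\vee J\leq_{tab} F$ on the lower-order terms, and termination of the rewriting is argued from that (the factor pairs become strictly more "spread out"), rather than from a single monomial order refining $\geq_{tab}$ handed to you in advance; as you note, this bookkeeping is classical but is where the real work sits in both proofs.
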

\begin{proof}
Let us give a sketch of a proof. 
For more details or different proofs, we refer the reader to, 
for example, \cite{BH93, Ei80, DEP82, Fu97, GL01, MS05, Se07}.
We can begin with a determinantal identity: 
for each incomparable pair $(I, J)$ in $L_{n,m}$, we have
\[
\delta_I \delta_J = \delta_{I \vee J} \delta_{I \wedge J} +  \sum_{E,F} c_{E,F} \delta_{E} \delta_{F}
\]
where $E \leq_{tab} (I \wedge J) \leq_{tab}  (I \vee J) \leq_{tab} F$. 
By applying it to a non-standard monomial $\Delta$ in $\mathcal{R}^{\lambda}_{n,m}$ as many as possible, 
we can express $\Delta$ as a linear combination of standard monomials of shape  $\lambda$. 
Therefore, standard monomials of shape $\lambda$ span 
the space $\mathcal{R}^{\lambda}_{n,m}$.

To show that they are linearly independent, we can use 
 a monomial order on the set of all monomials $\prod_{ij} x_{ij}^{r_{ij}}$ 
in the polynomial ring $\mathbb{C}[M_{n,m}]$. Let us consider the graded lexicographic order $\geq_{glex}$ 
with respect to the following order on the variables
\[
x_{ab} > x_{cd}  \quad \text{\ if $b < d$; or $b=d$ and $a<c$.}
\]
Then, the initial monomial $in(\delta_I)$ of $\delta_I$, that is, 
the monomial appearing in the polynomial $\delta_I$ which is greater than all other monomials 
appearing in $\delta_I$ with respect to $\geq_{glex}$, is the product of 
the diagonal entries in \eqref{eq-det-delta}
\[
in(\delta_I)= x_{i_1 1} x_{i_2 2} \cdots x_{i_k k}.
\] 
From  $in(\prod_j \delta_{I_j}) = \prod_j in(\delta_{I_j})$, one can easily compute 
the initial monomials of standard monomials and show that standard monomials of shape 
$\lambda$ have distinct initial monomials with respect to $\geq_{glex}$. 
Therefore, they are linearly independent.  
\end{proof}

We note that standard monomials are stable under the left action of $A_n$, 
and therefore standard monomials of shape $\lambda$ form a weight basis for $\mathcal{R}_{n,m}^{\lambda} \cong V_n^{\lambda}$.
By identifying multichains in $L_{n,m}$ with semistandard Young tableaux, 
we obtain the following result.

\begin{corollary}
For a Young diagram $\lambda$ with $\mathbf{d}(\lambda) \leq n$, the dimension of the representation $V_n^{\lambda}$ 
is equal to the number of  semistandard Young tableaux on the Young diagram $\lambda$ with entries from $\{ 1, 2, ..., n\}$.
\end{corollary}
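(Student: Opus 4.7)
The plan is to read off the dimension of $V_n^{\lambda}$ from the basis produced in Theorem \ref{thm-smt}, and then translate standard monomials into semistandard Young tableaux using the combinatorics developed in Section 1.

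First, I would take $m=n$ (which is allowed since $\mathbf{d}(\lambda)\leq n$), so that $L_{n,m}=L_n$ and the flag algebra $\mathcal{R}_{n,n}$ contains a copy of every polynomial representation $V_n^{\lambda}$ with $\mathbf{d}(\lambda)\leq n$. By construction, the homogeneous component $\mathcal{R}^{\lambda}_{n,n}$ is isomorphic to $V_n^{\lambda}$ as a $GL_n$-representation, so
\[
\dim V_n^{\lambda} = \dim \mathcal{R}^{\lambda}_{n,n}.
\]

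Next, I would invoke Theorem \ref{thm-smt}, which asserts that standard monomials of shape $\lambda$ form a $\mathbb{C}$-basis for $\mathcal{R}^{\lambda}_{n,n}$. Thus $\dim V_n^{\lambda}$ equals the number of standard monomials $\Delta_T$ with shape $\lambda$. By the definition of standard monomial, such a $\Delta_T$ is indexed by a multichain $T=(I_1\leq_{tab}I_2\leq_{tab}\cdots\leq_{tab}I_r)$ in $L_n$ whose associated column-depth sequence $(d_1,\ldots,d_r)$, after transposition, yields $\lambda$.

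Finally, I would appeal to the bijection at the end of Section \ref{sec-yd-tab} (illustrated in Figure \ref{fig:chain-ssyt}): every multichain in $L_n$ is the sequence of columns of a unique semistandard Young tableau on $\{1,\ldots,n\}$, and conversely. Under this bijection, the shape of the resulting tableau is precisely the transpose of the tuple of column depths, i.e.\ the Young diagram $\lambda$ specified above. Hence standard monomials of shape $\lambda$ are in bijection with semistandard Young tableaux of shape $\lambda$ with entries in $\{1,\ldots,n\}$, which gives the claimed formula.

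The only nontrivial step is ensuring that the notion of \emph{shape} used in Theorem \ref{thm-smt} agrees with the shape of the corresponding semistandard tableau under the chain-to-tableau bijection; this is immediate from the definitions, since both are computed from the same sequence of column depths via transposition. So the proof is essentially a matter of assembling Theorem \ref{thm-smt} with the chain/tableau correspondence, and I do not expect any real obstacle beyond being careful about this shape bookkeeping.
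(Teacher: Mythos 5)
Your proposal is correct and is essentially the paper's own argument: the paper deduces the corollary directly from Theorem \ref{thm-smt} by noting that standard monomials of shape $\lambda$ form a (weight) basis of $\mathcal{R}_{n,m}^{\lambda}\cong V_n^{\lambda}$ and identifying multichains in $L_{n,m}$ with semistandard Young tableaux, exactly as you do (with the harmless specialization $m=n$ so that $L_{n,m}=L_n$). Your extra check that the shape of a standard monomial matches the shape of the associated tableau under the chain-to-tableau bijection is the right bookkeeping and poses no issue.
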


\subsection{Initial algebra and toric degeneration}

Now let us consider the initial algebra of the flag algebra with respect to the monomial order $\geq_{glex}$
\[
in(\mathcal{R}_{n,m}) = \left\{ in(f):  f \in \mathcal{R}_{n,m} \right\}.
\]

\begin{theorem}
There is a flat one-parameter family of algebras whose general fiber is isomorphic to $\mathcal{R}_{n,m}$
 and special fiber is isomorphic to the initial algebra $in(\mathcal{R}_{n,m})$.
\end{theorem}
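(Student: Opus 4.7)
The plan is to construct the flat family by the standard Gröbner degeneration associated to the monomial order $\geq_{glex}$. First, I would present $\mathcal{R}_{n,m}$ as a quotient
\[
\pi \colon \mathbb{C}[y_I : I \in L_{n,m}] \twoheadrightarrow \mathcal{R}_{n,m},
\qquad y_I \mapsto \delta_I,
\]
and let $J = \ker \pi$. By the determinantal identity used in the proof of Theorem \ref{thm-smt} together with the straightening argument, $J$ is generated by relations of the form
\[
R_{IJ} \;=\; y_I y_J - y_{I \vee J}\, y_{I \wedge J} - \sum_{E,F} c_{E,F}\, y_E y_F
\]
indexed by incomparable pairs $(I,J)$ in $L_{n,m}$, where the extra terms involve pairs with $E \leq_{tab} I \wedge J$ and $I \vee J \leq_{tab} F$.

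Next, I would transfer $\geq_{glex}$ to a positive integer weight vector $w$ on $\mathbb{C}[y_I]$ by setting $w(y_I)$ equal to the $\geq_{glex}$-weight of the leading monomial $in(\delta_I) = x_{i_1 1} x_{i_2 2} \cdots x_{i_k k}$. Since one has $in(\delta_I)\, in(\delta_J) = in(\delta_{I \vee J})\, in(\delta_{I \wedge J})$ as monomials in $\mathbb{C}[M_{n,m}]$, the $w$-initial form of $R_{IJ}$ is exactly the Hibi binomial $y_I y_J - y_{I \vee J}\, y_{I \wedge J}$. I would then form the Rees-type ideal $\widetilde{J} \subset \mathbb{C}[y_I, t]$ by the usual $t$-homogenization of each $R_{IJ}$, so that $t = 1$ recovers $R_{IJ}$ and $t = 0$ yields its $w$-initial form, and set $B = \mathbb{C}[y_I, t]/\widetilde{J}$, regarded as a $\mathbb{C}[t]$-algebra.

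The identification of the two fibers is then routine. Setting $t = 1$ recovers the original generators of $J$, so $B/(t-1) \cong \mathcal{R}_{n,m}$. Setting $t = 0$ leaves the ideal generated by the Hibi binomials $y_I y_J - y_{I \vee J} y_{I \wedge J}$, which equals $in_w(J) = in_{\geq_{glex}}(J)$; the quotient $B/(t)$ is the Hibi algebra on $L_{n,m}$, and it maps isomorphically onto $in(\mathcal{R}_{n,m})$ via $y_I \mapsto in(\delta_I)$, using that the standard monomials have pairwise distinct initial monomials (Theorem \ref{thm-smt}).

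The main obstacle is verifying flatness over $\mathbb{C}[t]$. Since $\mathbb{C}[t]$ is a PID, this reduces to showing that $B$ is torsion-free, which I would establish by exhibiting an explicit $\mathbb{C}[t]$-basis. The natural candidate is the set of images of standard monomials $\prod_j y_{I_j}$ in $B$, indexed by multichains $I_1 \leq_{tab} \cdots \leq_{tab} I_r$ in $L_{n,m}$, or equivalently by semistandard Young tableaux with at most $m$ columns and entries in $\{1,\ldots,n\}$. By Theorem \ref{thm-smt} these descend to a basis of the general fiber $\mathcal{R}_{n,m}$, and by the algebra-with-straightening-laws property of the Hibi algebra they also descend to a basis of the special fiber. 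Since both fibers then have identical Hilbert functions in every multidegree, $B$ is a free $\mathbb{C}[t]$-module, and the family is flat.
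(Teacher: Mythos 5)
Your construction is correct in outline, but it takes a genuinely different route from the paper. The paper stays on the subalgebra side: using Theorem \ref{thm-smt} it observes that for any $f \in \mathcal{R}_{n,m}$ the initial monomial $in(f)$ equals $in(\Delta_i)$ for one of the standard monomials in its expansion, hence $in(\mathcal{R}_{n,m})$ is generated by the $in(\delta_I)$, i.e.\ $\{\delta_I : I \in L_{n,m}\}$ is a finite SAGBI basis; the existence of the flat one-parameter degeneration is then quoted from the general SAGBI/Gr\"obner degeneration machinery (Conca--Herzog--Valla, Gonciulea--Lakshmibai, Kogan--Miller, Miller--Sturmfels). You instead work on the presentation side: you re-prove that machinery in this special case by presenting $\mathcal{R}_{n,m}$ via the straightening relations $R_{IJ}$, $t$-homogenizing with respect to a weight, and checking the two fibers and flatness directly, with the standard monomials serving as a $\mathbb{C}[t]$-basis. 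What your route buys is self-containedness and an explicit identification of the special fiber with the Hibi algebra on $L_{n,m}$ (which the paper only obtains afterwards, via $\mathbb{C}[S_{n,m}]$); what it costs is several verifications you currently assert rather than prove: (i) that $\ker\pi$ is generated by the $R_{IJ}$ (this needs the spanning-plus-linear-independence argument for standard monomials, not just the determinantal identity); (ii) that there is an integral weight $w$ compatible with $\geq_{glex}$ for which the trailing terms $\delta_E\delta_F$ of each straightening relation have strictly smaller weight than $\delta_I\delta_J$, so that the $w$-initial form of $R_{IJ}$ really is the Hibi binomial; and (iii) that the ideal generated by these initial forms equals $in_w(J)$, i.e.\ that the $R_{IJ}$ are a Gr\"obner basis --- this does not follow formally from taking initial forms of a generating set, though it can be recovered from your own Hilbert-function comparison (the special fiber is spanned by standard monomials and surjects onto $\mathbb{C}[y_I]/in_w(J)$, which has the Hilbert function of $\mathcal{R}_{n,m}$, forcing equality). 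With those points filled in, your argument is a complete proof and is essentially the standard proof of the theorem the paper cites.
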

\begin{proof}
From Theorem \ref{thm-smt}, every element $f\in \mathcal{R}_{n,m}$ can be uniquely expressed 
as a linear combination of standard monomials 
\[
f = c_1 \Delta_1 + c_2 \Delta_2 + \cdots + c_k \Delta_k.
\]
Since standard monomials have distinct initial monomials,  $in(f) = in(\Delta_i)$ for some $i$. 
Also, $\Delta_i$ is the product of some $\delta_{I_{j}}$ and therefore its initial monomial is the product of $in(\delta_{I_{j}})$. 
This shows that the initial algebra $in(\mathcal{R}_{n,m})$ is generated by $in(\delta_I)$ for $I \in L_{n,m}$ 
and that the set
$\left\{ \delta_I  \in \mathcal{R}_{n,m} : I \in L_{n,m} \right\}$
forms a finite SAGBI basis for the algebra $\mathcal{R}_{n,m}$. 
This guarantees that there is a flat degeneration from the flag algebra $\mathcal{R}_{n,m}$ 
to its initial algebra $in(\mathcal{R}_{n,m})$. See \cite{CHV96, GL96, KM05, MS05}.
\end{proof}

To investigate the structure of the initial algebra $in(\mathcal{R}_{n,m})$, 
we restrict the bijection from $L_n$ to $\Lambda_n$ 
given in Theorem \ref{thm:isom-birkoff} to $L_{n,m}$.
For $I \in L_{n,m}$, since the depth of $I$ is not more than $m$, 
we have $f_I (z_j^{(n)})=0$ for all $m+1 \leq j \leq n$. By the order structure of $\Gamma_n$, 
this condition forces $f_I(z_j^{(i)})=0$ for all $j \geq m+1$.
We define the smallest subposet of $\Gamma_n$ containing $Supp(f_I)$ for all $I \in L_{n,m}$
\[
\Gamma_{n,m} = \left\{  z^{(i)}_j \in \Gamma_n :   j \leq m \right\}.
\]  
See Figure \ref{fig:Hasse-GT53}. We write $S_{n,m}$ for 
the submonoid of $S_n$ consisting of all order-preserving maps in $S_n$ whose supports are 
in $\Gamma_{n,m}$ and let  $\Lambda_{n,m}= \Lambda_n \cap S_{n,m}$. 

\begin{figure}[!ht]
\centering
\begin{tikzpicture}[scale=.86]
   \node (51) at (-4,3) {$z^{(5)}_1$};
   \node (52) at (-2,3) {$z^{(5)}_2$};
   \node (53) at (0,3) {$z^{(5)}_3$};
   \node (41) at (-3,2) {$z^{(4)}_1$};
   \node (42) at (-1,2) {$z^{(4)}_2$};
   \node (43) at (1,2) {$z^{(4)}_3$};
   \node (31) at (-2,1) {$z^{(3)}_1$};
   \node (32) at (0,1) {$z^{(3)}_2$};
   \node (33) at (2,1) {$z^{(3)}_3$};  
   \node (21) at (-1,0) {$z^{(2)}_1$};
   \node (22) at (1,0) {$z^{(2)}_2$};
   \node (11) at (0,-1) {$z^{(1)}_1$};  
   \draw (51) -- (41) -- (52) -- (42) -- (53) -- (43);
   \draw (41) -- (31) -- (42) -- (32) -- (43) -- (33);  
   \draw (31) -- (21) -- (32) -- (22) -- (33);
   \draw (21)  -- (11) -- (22);  
\end{tikzpicture}
\caption{The Hasse diagram of the poset $\Gamma_{5,3}$. 
              The elements decrease along the lines from left to right.}
\label{fig:Hasse-GT53}
\end{figure}
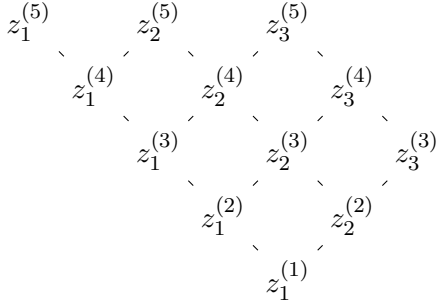

\begin{proposition}
The initial algebra $in(\mathcal{R}_{n,m})$ of the flag algebra $\mathcal{R}_{n,m}$ 
is isomorphic to the affine monoid algebra $\mathbb{C}[S_{n,m}]$ of $S_{n,m}$.
\end{proposition}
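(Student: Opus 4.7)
The plan is to realize the desired isomorphism as a composition
\[
in(\mathcal{R}_{n,m}) \;\cong\; \mathcal{H}_{L_{n,m}} \;\cong\; \mathbb{C}[S_{n,m}],
\]
where $\mathcal{H}_{L_{n,m}}$ denotes the Hibi algebra on the sub-lattice $L_{n,m} \subset L_n$. The second isomorphism is Proposition \ref{prop-Hibi-Sn} applied to $L_{n,m}$: this sub-poset is closed under $\vee$ and $\wedge$ because both operations preserve depth $\leq m$, and under the bijection of Theorem \ref{thm:isom-birkoff} it corresponds exactly to $\Lambda_{n,m}$; so the assignment $x_I \mapsto f_I$ gives an isomorphism from $\mathcal{H}_{L_{n,m}}$ to $\mathbb{C}[S_{n,m}]$ by the identical argument.

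For the first isomorphism, I would define an algebra homomorphism $\Phi: \mathcal{H}_{L_{n,m}} \to in(\mathcal{R}_{n,m})$ by $x_I \mapsto in(\delta_I)$. The only thing to verify for well-definedness is that the Hibi relations hold in the initial algebra, i.e., that for every incomparable pair $(I,J)$ in $L_{n,m}$ one has the monomial identity
\[
in(\delta_I) \cdot in(\delta_J) \;=\; in(\delta_{I \vee J}) \cdot in(\delta_{I \wedge J}).
\]
Writing $I = [i_1,\ldots,i_a]$ and $J = [j_1,\ldots,j_b]$ with $a \leq b$, the explicit formulas for $I \vee J$ and $I \wedge J$ show that both sides equal $\prod_{\ell=1}^{a} x_{i_\ell,\ell} x_{j_\ell,\ell} \cdot \prod_{\ell=a+1}^{b} x_{j_\ell,\ell}$, since the multiset $\{\max(i_\ell, j_\ell),\, \min(i_\ell, j_\ell)\}$ equals $\{i_\ell, j_\ell\}$ position by position.

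Surjectivity of $\Phi$ is immediate from the preceding theorem's proof, which shows that $\{\delta_I : I \in L_{n,m}\}$ is a SAGBI basis for $\mathcal{R}_{n,m}$, so the $in(\delta_I)$ generate $in(\mathcal{R}_{n,m})$ as a $\mathbb{C}$-algebra. For injectivity, I would compare bases: the standard monomials $\prod_j x_{I_j}$ indexed by multichains $I_1 \leq_{tab} \cdots \leq_{tab} I_r$ in $L_{n,m}$ form a $\mathbb{C}$-basis of $\mathcal{H}_{L_{n,m}}$, and $\Phi$ sends them to $\prod_j in(\delta_{I_j}) = in(\Delta_T)$; these are pairwise distinct monomials in $\mathbb{C}[M_{n,m}]$ by the computation of initial monomials carried out in the proof of Theorem \ref{thm-smt}, hence linearly independent in $in(\mathcal{R}_{n,m})$. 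No step is truly hard: the argument is essentially an assembly of results already in place, with the single genuine computation being the one-line Hibi-relation check above.
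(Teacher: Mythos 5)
Your proposal is correct and follows essentially the same route as the paper: the central identity $in(\delta_I)\,in(\delta_J) = in(\delta_{I\vee J})\,in(\delta_{I\wedge J})$, the restriction of the correspondence $L_{n,m}\leftrightarrow \Lambda_{n,m}$, and the SAGBI/standard-monomial facts from Theorem \ref{thm-smt} are exactly the ingredients of the paper's argument. The only organizational difference is that you factor through the Hibi algebra on $L_{n,m}$ and spell out injectivity and surjectivity explicitly, whereas the paper sends $in(\delta_I)\mapsto f_I$ directly to $\mathbb{C}[S_{n,m}]$ and records the Hibi-algebra structure as a remark immediately afterward.
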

\begin{proof}
With the correspondence between $L_{n,m}$ and $\Lambda_{n,m}$, 
we define a map  $\phi: in(\mathcal{R}_{n,m}) \rightarrow \mathbb{C}[S_{n,m}]$ 
sending $in(\delta_I)$ to $f_I$ for $I \in L_{n,m}$. 
For $I = [i_1, ..., i_a]$ and $J=[j_1, ..., j_b]$ with $a \leq b \leq m$, we have
\[
in(\delta_I) in(\delta_J) = \prod_{k=1}^a (x_{i_k,k} x_{j_k,k}) \times  \prod_{k=a+1}^b x_{i_{k},k} 
                                    = in(\delta_{I \vee J}) in(\delta_{I \wedge J}).
\]
With \eqref{eq-sum-j-m22}, $\phi$ is well-defined and it can be  
extended to the initial monomials of standard monomials to give a semigroup isomorphism 
between the semigroup of the initial monomials of all $h \in {R}_{n,m}$ and $S_{n,m}$.
\end{proof}

With Proposition \ref{prop-Hibi-Sn}, this shows that the initial algebra $in(\mathcal{R}_{n,m})$ has 
the structure of the Hibi algebra on $L_{n,m}$. We also remark that its spectrum
$Spec(in(\mathcal{R}_{n,m}))$ can be understood as an affine toric variety 
associated with the lattice cone $S_{n,m}$ of GT patterns 
defined on $\Gamma_{n,m}$.

\section{More subposets of $L_{n}$ and $\Gamma_{n}$}

There are some subposets of $L_{n,m}$ and $\Gamma_{n,m}$ 
whose associated Hibi algebras  are closely related 
to important questions in invariant theory and representation theory 
of classical groups. 
In this section, we list some of them and relevant works.

\subsection{Grassmannians}

For $m \leq n$, let us consider the subposet $G_{n,m}$ of $L_{n,m}$ consisting of 
all column tableaux of depth $m$
\[
G_{n,m} = \left\{ I \in L_{n,m} : \text{\ the depth of $I$ is $m$} \right\}.
\]

Using the argument in \S \ref{sec-GT-indicator} 
we can find its associated GT poset. See  \cite[\S 3]{Ki10} and Figure \ref{fig:grss}. 
The multichains of $G_{n,m}$, the corresponding GT patterns, and the Hibi algebra attached to them 
can be used to describe the Grassmannian variety of $m$ dimensional subspaces of 
$\mathbb{C}^n$, a ring of polynomials in $\mathbb{C}[M_{n,m}]$ invariant under 
the right action of the special linear group $SL_m(\mathbb{C})$, and finite dimensional representations of 
the general linear group $GL_n(\mathbb{C})$ labeled by rectangular Young diagrams 
of depth $m$. See, for example, \cite{GL01, HP94, Stu93}. 
This poset also has an interesting connection with double tablaux or pairs 
of Young tableaux. See \cite{Ki10}.

\subsection{Symplectic groups}

For $n=2m$, let us consider the subposet of $L_{n,m}$
\[
P_n = \left\{ I \in L_{n,m}: I \geq_{tab} [1,3, 5, ..., 2m-1] \right\}.
\]

We can  find its associated GT poset using the argument in \S \ref{sec-GT-indicator}. 
See \cite{Ki08} and Figure \ref{fig:symplectic}. 
The multichains of $P_n$ and the GT patterns corresponding to them 
 can be used to label weight basis elements for 
the rational representations of the symplectic group $Sp_{n}(\mathbb{C})$. 
See, for example, \cite{Be86, DeC79, Ki08, KE83, Mo06, Pr94}.

\begin{figure}[!ht]
\centering
\begin{minipage}{.5\textwidth}
  \centering
	\begin{tikzpicture}[scale=.9]
	   \node (63) at (-1,4) {$z^{(6)}_3$};
	   \node (52) at (-2,3) {$z^{(5)}_2$};
	   \node (53) at (0,3) {$z^{(5)}_3$};
	   \node (41) at (-3,2) {$z^{(4)}_1$};
	   \node (42) at (-1,2) {$z^{(4)}_2$};
	   \node (43) at (1,2) {$z^{(4)}_3$};
	   \node (31) at (-2,1) {$z^{(3)}_1$};
	   \node (32) at (0,1) {$z^{(3)}_2$};
	   \node (33) at (2,1) {$z^{(3)}_3$};  
	   \node (21) at (-1,0) {$z^{(2)}_1$};
	   \node (22) at (1,0) {$z^{(2)}_2$};
	   \node (11) at (0,-1) {$z^{(1)}_1$};  
	   \draw (52) -- (63) -- (53);
	   \draw (41) -- (52) -- (42) -- (53) -- (43); 
	   \draw (41) -- (31) -- (42) -- (32) -- (43) -- (33);  
	   \draw (31) -- (21) -- (32) -- (22) -- (33);
	   \draw (21)  -- (11) -- (22);  
	\end{tikzpicture}
	\caption{The subposet of $\Gamma_{7,3}$ associated with $G_{7,3}$.}
	\label{fig:grss}
\end{minipage}%
\begin{minipage}{.5\textwidth}
  \centering
	\begin{tikzpicture}[scale=.86]
	   \node (61) at (-5,4) {$z^{(6)}_1$};
	   \node (62) at (-3,4) {$z^{(6)}_2$};
	   \node (63) at (-1,4) {$z^{(6)}_3$};
	   \node (51) at (-4,3) {$z^{(5)}_1$};
	   \node (52) at (-2,3) {$z^{(5)}_2$};
	   \node (53) at (0,3) {$z^{(5)}_3$};
	   \node (41) at (-3,2) {$z^{(4)}_1$};
	   \node (42) at (-1,2) {$z^{(4)}_2$};
	   \node (31) at (-2,1) {$z^{(3)}_1$};
	   \node (32) at (0,1) {$z^{(3)}_2$};
	   \node (21) at (-1,0) {$z^{(2)}_1$};
	   \node (11) at (0,-1) {$z^{(1)}_1$};
	 \draw (61) -- (51) -- (62) -- (52) -- (63) -- (53);
	 \draw (51) -- (41) -- (52) -- (42) -- (53); 
	 \draw (41) -- (31) -- (42) -- (32);  
	 \draw (31) -- (21) -- (32);
 	 \draw (21)  -- (11);  
	\end{tikzpicture}
	\caption{The subposet  of $\Gamma_{6,3}$ associated with $P_6$.}
	\label{fig:symplectic}
	\end{minipage}%
\end{figure}

\subsection{Branching rules}

For $m \leq n$ and $k < n$, let us consider the subposet $B_{n,m,k}$ of 
$L_{n,m}$ consisting of all column tableaux of the forms
\[
[1, 2, ..., p], \quad [i_1, i_2, ..., i_q], \quad [1, 2, ..., r, j_1, j_2, ..., j_s]
\]
where $1 \leq p, r \leq min(k,m)$, $1 \leq q, s \leq min(n-k,m)$, $1\leq r+s \leq m$, and 
$k+1 \leq i_c, j_d \leq n$. 
The GT poset associated with $B_{n,m,k}$ can be computed as in \S \ref{sec-GT-indicator}. 
See Figure \ref{fig:branching743}, Figure \ref{fig:branching532}, and \cite{Ki12}.

For each semistandard tableau $T$ obtained from a multichain of $B_{n,m,k}$, 
by erasing the entry $i$ in the $i$th row of $T$ for $1\leq i \leq k$ and replacing 
the entry $j$ in $T$ with $j - k$ for all $j\geq k+1$, 
we can realize $T$ as a semistandard tableau on a skew Young diagram $\lambda/\mu$ 
with content $\nu = (\nu_1, ..., \nu_{n-k})$. 
Here, the inner diagram is $\mu=(\mu_1, \mu_2,...)$ where $\mu_i$ is 
the number of all boxes in the $i$th row of $T$ containing $i$ for $1\leq i \leq k$, and 
$\nu_j$ is the number of boxes in $T$ containing $j+k$ for $1\leq j \leq n-k$.

For example, with $n=10$, $m=5$, and $k=4$, a semistandard Young tableau $T$, 
as a multichain of $B_{10,5,4}$, can be identified with a skew tableau $T'$
on a skew Young diagram $(12,10,6,4,0)/(8,5,3,0)  $ with content $(5,2,3,3,2,0)$ 
where
\[
T = \young(111111115578,2222257899,333679,5568)
\quad \leftrightarrow \quad
T' = \young(\ \ \ \ \ \ \ \ 1134,\ \ \ \ \ 13455,\ \ \ 235,1124).
\]

Then, the multichains of $B_{n,m,k}$, the corresponding GT patterns, and the Hibi algebra 
attached to  them can be used 
to describe branching rules for some pairs $(G,H)$ of classical groups, 
that is, how a representation of $G$ decomposes into 
irreducible representations of a subgroup $H$ of $G$.  See \cite{Ki12, KY12, Mo06, Pr94}.

\begin{figure}[!ht]
\centering
\begin{minipage}{.5\textwidth}
  \centering
\begin{tikzpicture}[scale=.85]
	   \node (71) at (-4,3) {$z^{(8)}_1$};
	   \node (72) at (-2,3) {$z^{(8)}_2$};
	   \node (73) at (0,3) {$z^{(8)}_3$};
	   \node (61) at (-3,2) {$z^{(7)}_1$};   
	   \node (62) at (-1,2) {$z^{(7)}_2$};
	   \node (63) at (1,2) {$z^{(7)}_3$};
	   \node (51) at (-2,1) {$z^{(6)}_1$};
	   \node (52) at (0,1) {$z^{(6)}_2$};
	   \node (53) at (2,1) {$z^{(6)}_3$};
  \node (41) at (-1,0) {$z^{(5)}_1$};
  \node (42) at (1,0) {$z^{(5)}_2$};
  \node (43) at (3,0) {$z^{(5)}_3$};
	 \draw (71) -- (61) -- (72) -- (62) -- (73) -- (63);
	 \draw (61) -- (51) -- (62) -- (52) -- (63) -- (53);  
  \draw (51) -- (41) -- (52) -- (42) -- (53) -- (43); 
\end{tikzpicture}
\caption{The subposet of $\Gamma_{8,3}$ associated with $B_{8,3,5}$.}
\label{fig:branching743}
\end{minipage}%
\begin{minipage}{.5\textwidth}
  \centering
\begin{tikzpicture}[scale=.85]
	  \node (71) at (-4,3) {$z^{(5)}_1$};
	  \node (72) at (-2,3) {$z^{(5)}_2$};
	  \node (73) at (0,3) {$z^{(5)}_3$};
	  \node (61) at (-3,2) {$z^{(4)}_1$};   
	  \node (62) at (-1,2) {$z^{(4)}_2$};
	  \node (63) at (1,2) {$z^{(4)}_3$};
	  \node (51) at (-2,1) {$z^{(3)}_1$};
	  \node (52) at (0,1) {$z^{(3)}_2$};
	  \node (53) at (2,1) {$z^{(3)}_3$};
	  \node (41) at (-1,0) {$z^{(2)}_1$};
	  \node (42) at (1,0) {$z^{(2)}_2$};
	 \draw (71) -- (61) -- (72) -- (62) -- (73) -- (63);
	 \draw (61) -- (51) -- (62) -- (52) -- (63) -- (53);  
	 \draw (51) -- (41) -- (52) -- (42) -- (53); 
\end{tikzpicture}
\caption{The subposet of $\Gamma_{5,3}$ associated with $B_{5,3,2}$.}
\label{fig:branching532}
	\end{minipage}%
\end{figure}

\subsection{Tensor product of representations}

The tensor product decomposition problem 
to determine how tensor products of group representations 
decomposes is an important problem in representation theory 
with many applications.
Recently, Howe and his collaborators have shown that 
answers to many of these questions can be given nicely in terms of 
the Hibi algebras associated with some subposets of $\Gamma_{n,m}$ 
and their variations. We refer the interested reader to 
\cite{Ho13, HL12, HKL1, HKL2, Ki18, KL13, KYo17, Wa14}.

\bigskip


\subsection*{Acknowledgement}

Parts of this article were presented at 
The Prospects for Commutative Algebra, Osaka, Japan, July 2017.
We express our sincere gratitude 
to the organizers for the wonderful and stimulating conference.

\bigskip


\bigskip

\end{document}